\documentclass[11pt,reqno]{amsart}

\usepackage[papersize={8.5in,11in},width=6in,height=8in,centering]{geometry}
\usepackage{amsfonts}
\usepackage{amssymb}
\usepackage{amsthm}
\usepackage{amsmath}
\usepackage{graphicx}

\usepackage[all]{xy}

\theoremstyle{definition}
 \newtheorem{theorem}{\bf Theorem}[section]
 
 \newtheorem{lemma}[theorem]{Lemma}
 \newtheorem{corollary}[theorem]{Corollary}
 
\theoremstyle{definition}
 \newtheorem{example}[theorem]{Example}
 \newtheorem{remark}[theorem]{Remark}
 
 \newtheorem{proposition}[theorem]{Proposition}
\numberwithin{equation}{section}




\begin{document}

\title[4-dimensional Hamiltonian  manifolds with boundary]{On the existence of  Hamiltonian 4-manifolds  with a contact type boundary}

\author[A.\ Marinković]{Aleksandra Marinković}
\address[]{Matemati\v{c}ki Fakultet, Studentski trg 16, 11
  000 Beograd, Srbija}
\email{aleksandra.marinkovic@math.bg.ac.rs}





\begin{abstract}
While the Hamiltonian group actions on closed symplectic manifolds have been widely explored throughout the last couple of decades, the study on Hamiltonian group actions on symplectic manifolds with a contact type boundary has started only recently, with the work by  Niederkrüger and the author \cite{MN}. In this note we pursue this study by presenting several methods to construct such Hamiltonian circle manifolds in dimension 4.

\end{abstract}

\maketitle
\tableofcontents

\section{Introduction}

Over the past several decades Hamiltonian group actions on closed symplectic manifolds have been extensively studied by many authors 
\cite{Atiyah, Audin,AH,Delzant, GS, HK, JR, Karshon,  McDuff, MT, Li, Tolman} 
etc. The profound richness of these studies has captivated researchers, inspiring them to focus on diverse topics:  foundational concepts of Hamiltonian actions, numerous topological properties of  Hamiltonian manifolds, symplectic reduction questions, obstructions for symplectic action to be Hamiltonian, classification results in various cases and many others. The study also extends to closed symplectic orbifolds \cite{KL} and to open symplectic manifolds \cite{KZ}. However,   the foundation   on Hamiltonian group actions on symplectic manifolds with a contact type boundary has been  only recently  established 
in \cite{MN}. While the most of the topics addressed above are yet to be explored, there are some questions completely answered in \cite{MN}. For instance, the contact type boundary of a Hamiltonian circle 4-manifold is always connected  (\cite[Theorem C.]{MN}).

 \medskip

We now briefly emphasise 
some relevant properties of Hamiltonian circle actions that hold  for closed symplectic manifolds as well as for symplectic manifolds with a contact type boundary.

\begin{itemize} 

\item The corresponding Hamiltonian function $H$ is  Morse-Bott and the indices of all critical sets of $H$ are even numbers;

\item critical sets are precisely the fixed point sets of the circle action and they are symplectic submanifolds;

\item the maximum and the minimum are unique;

\item there exists an invariant metric and an invariant almost complex structure $J$ that defines $\mathbb C^*$-action on the manifold. 

\end{itemize}
However, there are still crucial differences.

\begin{itemize} 

\item
A symplectic action on a closed symplectic manifold may not be Hamiltonian, take for instance $T^2$ with a volume form and a rotation. However, 
 every symplectic  action on a symplectic manifold with a contact type boundary is automatically Hamiltonian  (\cite[Theorem A.]{MN});

\item while a Hamiltonian circle action on a closed manifold always has fixed points, the existence of a boundary allows even a free Hamiltonian circle action;

\item in the presence of a contact type boundary,  critical sets of a Hamiltonian function may also have a non empty boundary;

\item in the closed case, the gradient trajectories of $\nabla H$  start and end only in a critical point and invariant $J$-holomorphic curves (i.e. $\mathbb C^*$-orbits) are diffeomorphic to spheres (also called gradient spheres). If there is a contact type  boundary, trajectories can also start or end at the boundary. Therefore, invariant $J$-holomorphic curves are diffeomorphic to spheres, discs or annuli.


\end{itemize}

\medskip
These interesting comparisons motivate us to keep exploring Hamiltonian $S^1$-manifolds with a contact type boundary.
The next natural step is to explore the existence of these manifolds.
In this note we present  methods to construct such 4-manifolds starting from oriented surfaces (Theorem \ref{theorem1a} and Theorem \ref{theorem2}) as well as constructing new manifolds from already existing once (Theorem \ref{theorem3} and Theorem \ref{theorem4}). We also make the first step toward the classification of Hamiltonian $S^1$-manifolds with a contact type boundary with a free action (Theorem \ref{theorem1b}).
The classification of closed Hamiltonian $S^1$-manifolds in dimension 4 is completely done by Karshon in \cite{Karshon} and it relies on the information encoded in the fixed point sets and the gradient spheres connecting them.
Since in the presence of a contact type boundary, beside gradient spheres, there also exist gradient discs and gradient annuli,  it would be interesting to pursue exploring the classification  of 
Hamiltonian $S^1$-manifolds with a contact type boundary.

\medskip
In Section 2 we introduce the reader with basic properties of these manifolds and in Section 3 we present  basic examples,  the closed unit disc  with a standard symplectic structure 
  (Example \ref{example:disc} a))
and disc cotangent bundle examples, where the Hamiltonian circle action is a lift of an effective circle action on the base (Example \ref{example:cotangent}). Note that, according to Audin \cite[I.3.a.]{Audin}, the only surfaces admitting  such action are $T^2, S^2, \mathbb{RP}^2$ and a Klein bottle. There are also  Hamiltonian 4-manifolds with contact type boundary that are not necessarily Liouville domains. The existence of holomorphic spheres (Example  \ref{example:disc} b)) or closed critical surfaces (Theorem  \ref{theorem2}) guarantees, due to Stoke's,  that the corresponding symplectic form is not  exact.
  
  \medskip  
  
Our first main result in this article is the construction of symplectic 4-manifolds with a convex contact boundary that are equipped with a free Hamiltonian circle action.
The construction is based on defining the set of Legendrian orbits, i.e. the  orbits at the boundary that are tangent to the contact structure.

\begin{theorem}\label{theorem1a}
 For any orientable surface $B$ and any decomposition of $B$ into two homeomorphic surfaces with a common boundary $\Gamma$, there exists a symplectic 4-manifold $S^1\times \tilde B$  with a contact type boundary $S^1\times  B$, where $\partial \tilde B=B$, that admits a free Hamiltonian circle action whose set of Legendrian orbits is precisely $S^1\times \Gamma$.
\end{theorem}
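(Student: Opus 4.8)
The plan is to build $M$ as a product and read off every structure from the two factors. First I would reinterpret the hypothesis as the statement that $B$ is a \emph{double}: writing $\Sigma:=B_+\cong B_-$, a compact orientable surface with $\partial\Sigma=\Gamma\neq\emptyset$, the decomposition $B=B_+\cup_\Gamma B_-$ exhibits $B$ as $D\Sigma$, the double of $\Sigma$ along $\Gamma$. I would then set $\tilde B:=\Sigma\times[-1,1]$, with the corner $\Gamma\times\{\pm1\}$ rounded, so that $\partial\tilde B\cong B$ with the seam $\Gamma$ corresponding to $\partial\Sigma\times\{0\}$, and take $M:=S^1\times\tilde B=S^1_\theta\times\Sigma\times[-1,1]_t$. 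On $M$ I would put the split form $\omega=d\theta\wedge dt+\sigma$, where $\sigma$ is an area form on $\Sigma$; since $\omega\wedge\omega=2\,d\theta\wedge dt\wedge\sigma\neq0$ it is symplectic. The circle action rotating the $S^1$-factor is free (its generator $\partial_\theta$ never vanishes), and because $\iota_{\partial_\theta}\omega=dt$ it is Hamiltonian with moment map $H=t$; in particular $H$ has no interior critical points, exactly as a free action requires.

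The contact-type boundary comes from a global Liouville field. I would choose a primitive $\mu$ of $\sigma$ that restricts to a positive $1$-form on each component of $\partial\Sigma$. This is where the surface-with-boundary hypothesis is used, and it is possible for any such $\Sigma$ because the total flux $\int_{\partial\Sigma}\mu=\mathrm{Area}(\Sigma)>0$ may be redistributed among the boundary circles by adding closed $1$-forms with prescribed periods (equivalently, every surface with non-empty boundary is a Liouville domain). Writing $Y_2$ for the Liouville field of $(\Sigma,\sigma)$ determined by $\iota_{Y_2}\sigma=\mu$, which points outward along $\Gamma$, I would set $Y:=t\,\partial_t+Y_2$. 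A one-line computation gives $\iota_Y\omega=-t\,d\theta+\mu$ and hence $\mathcal{L}_Y\omega=\omega$, so $Y$ is Liouville. I would then verify that $Y$ is outward-transverse along $\partial M$: on the faces $t=\pm1$ its $\partial_t$-component equals $\pm1$ and so points out, on the side $\partial\Sigma\times[-1,1]$ the field $Y_2$ points outward, and at each rounded corner $Y$ lies in the cone spanned by the two outward normal directions, so transversality survives the smoothing. This makes $\partial M=S^1\times B$ a convex contact boundary with contact form $\alpha=(-t\,d\theta+\mu)\big|_{\partial M}$.

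Finally I would identify the Legendrian orbits. An orbit $S^1\times\{p\}$ is tangent to $\xi=\ker\alpha$ precisely when $\alpha(\partial_\theta)=0$; since $\mu$ is pulled back from $\Sigma$, one has $\alpha(\partial_\theta)=-t$, which vanishes exactly on $\{t=0\}\cap\partial M$. On the faces $t=\pm1$ this quantity is $\mp1\neq0$, so its zero set lies on the side $\partial\Sigma\times[-1,1]$ and equals $S^1\times\partial\Sigma\times\{0\}=S^1\times\Gamma$, which is precisely the seam of the double — giving the asserted set of Legendrian orbits.

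The steps I expect to demand the most care are the existence of the \emph{everywhere-convex} primitive $\mu$ on $\Sigma$ (the flux-distribution point, which is the sole obstacle if $\Gamma$ has several components), and the verification that outward-transversality of $Y$ together with the location of the Legendrian locus at $t=0$ are both preserved when the corners $\Gamma\times\{\pm1\}$ are rounded. Everything else — nondegeneracy of $\omega$, freeness of the action, and the moment-map identity $H=t$ — is immediate from the product structure.
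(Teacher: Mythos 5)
Your construction is correct and is essentially the paper's: both take the product of $S^1$, an interval, and $B_+$ with the split symplectic form, use a Liouville field that is the sum of $s\,\partial_s$ on the interval factor and a Liouville field on $(B_+,\sigma)$ pointing out of $\Gamma=\partial B_+$, let the circle rotate the $S^1$-factor so the moment map is the interval coordinate, and read off the Legendrian locus as the zero set of the moment map on the boundary, which forces the point to lie over $\Gamma$. The only real difference is how the smooth boundary is produced: the paper takes $W$ to be the sublevel set $\set{\tilde f_1+\tilde f_2\le 1/2}$ of the sum of the two Liouville potentials (smooth automatically), while you take the full product and round the corners $\Gamma\times\set{\pm1}$ — this is standard, happens away from $t=0$, and does not affect your identification of the Legendrian orbits.
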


We further  show  that every symplectic 4-manifold with a contact type boundary that admits a free Hamiltonian circle action is necessarily a trivial circle bundle,  what is not true in higher dimensions (Remark \ref{remark:higher}). We also show that every such manifold arises from the construction presented in Theorem \ref{theorem1a}.

 \begin{theorem} \label{theorem1b}
 Every symplectic 4-manifold $(W, \omega)$ with a contact type boundary $V$ and a free Hamiltonian circle action is a trivial circle bundle over a 3-manifold whose boundary is homeomorphic to an orientable surface $B$. Moreover,   the set of Legendrian orbits is a collection of tori $S^1\times\Gamma,$ where $\Gamma$ is the collection of circles that divides $B$ into two homeomorphic surfaces with a common boundary $\Gamma.$
\end{theorem}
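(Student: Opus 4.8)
The plan is to exploit two features forced by freeness: the Hamiltonian has no critical points at all, and every complex orbit is an annulus with both ends on the boundary. Since the action is free, $\pi\colon W\to \tilde B:=W/S^1$ is a principal circle bundle over a compact $3$-manifold $\tilde B$ with boundary $B:=V/S^1$, an orientable surface (orientability descends because $W$ is symplectic and $S^1$ is connected). Because $dH=\iota_X\omega$ with $X$ nowhere zero and $\omega$ nondegenerate, $H$ is a submersion on $W$; in particular it has no interior critical points and attains its maximum and minimum on $V$. Fixing an invariant metric and the invariant $J$, no gradient trajectory of $H$ can begin or end in the interior, so---appealing to the trichotomy recalled in the introduction---spheres and discs are excluded and every $\mathbb{C}^*$-orbit is an annulus $S^1\times[0,1]$ meeting $V$ in exactly two circles.

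Next I would record the Legendrian locus analytically. Writing $Y$ for the outward Liouville field and $\alpha=\iota_Y\omega|_V$ for the contact form, set $\rho:=\alpha(X)=\omega(Y,X)=-dH(Y)$ on $V$. An orbit is Legendrian exactly when $\rho=0$, and the sign of $\rho$ records the normal component of $\nabla H$ along $V$: where $\rho>0$ the gradient points inward (the low-$H$, entering end of an annulus) and where $\rho<0$ it points outward (the high-$H$, exiting end). The maximum and minimum of $H$ then force both signs to occur, so $\Gamma:=\{\bar\rho=0\}\subset B$ is nonempty and separates $B$ into the two open regions $B_+=\{\bar\rho>0\}$ and $B_-=\{\bar\rho<0\}$ of entering and exiting orbits.

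The homeomorphism $B_+\cong B_-$ comes from the annuli: each $\mathbb{C}^*$-orbit joins exactly one entering orbit (its low end in $B_+$) to exactly one exiting orbit (its high end in $B_-$), so following the gradient trajectory across the annulus gives a homeomorphism $B_+\to B_-$ with common boundary $\Gamma$, which is the first \emph{Moreover} claim. For the bundle, I would use the flow of $-\nabla\bar H$ on $\tilde B$: having no interior critical points, it carries every point back to $B_+$ in finite time and exhibits a deformation retraction of $\tilde B$ onto the compact surface $B_+$, whose boundary $\Gamma$ is nonempty. A compact surface with nonempty boundary is homotopy equivalent to a wedge of circles, so $H^2(\tilde B;\mathbb{Z})\cong H^2(B_+;\mathbb{Z})=0$; the Euler class vanishes and the principal circle bundle is trivial, $W\cong S^1\times\tilde B$. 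The Legendrian orbits then form $\pi^{-1}(\Gamma)=S^1\times\Gamma$, a union of tori.

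The main obstacle I anticipate is regularity at the turning locus: to conclude that $\Gamma$ is a genuine collection of circles and that the gradient-flow map $B_+\to B_-$ extends to a homeomorphism of closures pinching exactly along $\Gamma$, I must show that $0$ is a regular value of $\bar\rho$ and control the annuli where they become tangent to $V$. This is where the precise contact-type (convexity) hypothesis enters, and it is the step requiring genuine geometric input rather than the soft topology used elsewhere; I would handle it by a normal-form analysis of the pair $(H,\alpha)$ near a tangent orbit, as in the construction underlying Theorem \ref{theorem1a}.
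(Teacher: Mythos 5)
Your argument is correct and shares its skeleton with the paper's proof: freeness kills all critical points, so every gradient trajectory runs from boundary to boundary; connectedness of $V$ (\cite[Theorem C]{MN}) forces $H|_V$ to change sign, producing the Legendrian locus $\Gamma=\{H|_V=0\}/S^1$; and the gradient flow induces the homeomorphism between the two halves of $B$. Where you genuinely diverge is the triviality step. The paper builds an explicit global section: it first takes a section $\sigma_-$ over $B_-$ (possible since $H^2(B_-;\mathbb Z)=0$ because $B_-$ has boundary), transports it to $B_+$ by the gradient-flow homeomorphism so the two sections agree along $\Gamma$, and then extends the resulting section of $V\to B$ into all of $W/S^1$ by flowing interior orbits to the boundary and back. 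You instead retract $\tilde B=W/S^1$ onto one closed half of $B$ along $-\nabla\bar H$, conclude $H^2(\tilde B;\mathbb Z)=0$, and kill the Euler class; this is shorter and more conceptual, at the cost of not exhibiting the trivialization (which the paper's section does, and which is what lets the author say every such $W$ arises from the Theorem \ref{theorem1a} construction). The regularity worry you flag at the end is real but is already resolved by the paper's Lemmas \ref{lemma:legendrian_tori1} and \ref{lemma:legendrian_tori}: Cartan's formula $0=L_X\alpha=\iota_Xd\alpha+dH$ shows $dH$ cannot vanish on $\Gamma$ without forcing $X=0$ there, so $0$ is a regular value of $H|_V$ and $\Gamma$ is a collection of circles; you could simply cite that rather than redo a normal-form analysis. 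One small slip: since $\iota_X\omega=-dH$ one gets $\rho=\alpha(X)=\omega(Y,X)=+dH(Y)$, not $-dH(Y)$, so $\rho>0$ means $\nabla H$ points \emph{outward} (exiting, high-$H$ end); this only swaps your labels $B_\pm$ and does not affect the argument.
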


 Next, we  study the existence of critical surfaces
 in a Hamiltonian 4-manifold with a convex contact type boundary.
According to Karshon  \cite[Lemma 6.15]{Karshon}, a closed Hamiltonian 4-manifold admits at most two critical surfaces; in this case these two surfaces are  diffeomorphic and the Hamiltonian 4-manifold is a ruled manifold, i.e. it is a sphere bundle over a surface, where the circle action is  a rotation of the fiber. In contrast, a Hamiltonian 4-manifold with a contact type boundary admits at most one critical surface (Corollary \ref{cor:max}). 
The construction of such manifolds is shown in the following theorem.

\begin{theorem}\label{theorem2}
For any  symplectic surface $\Sigma$ (closed or with a non-empty boundary)  with an integral symplectic form $\omega_{\Sigma}$
there exists a symplectic  4-manifold with a contact type boundary that admits a Hamiltonian circle action whose  only critical set is homeomorphic to the surface $\Sigma$
and there are no other critical sets.
\end{theorem}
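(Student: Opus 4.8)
The plan is to realize the required $4$-manifold as a disc subbundle of a complex line bundle over $\Sigma$, with the circle acting by rotation of the fibres, so that the zero section becomes the unique critical set. The integrality hypothesis is used precisely to produce this bundle: for $\Sigma$ closed, the condition $[\omega_\Sigma]\in H^2(\Sigma;\mathbb{Z})$ yields, via the Boothby--Wang/prequantization construction, a principal circle bundle $\pi\colon P\to\Sigma$ with a connection $1$-form $\alpha$ satisfying $d\alpha=\pi^*\omega_\Sigma$. Since $\omega_\Sigma$ is an area form, $\alpha\wedge d\alpha=\alpha\wedge\pi^*\omega_\Sigma$ is a volume form, so $\alpha$ is a contact form on the $3$-manifold $P$. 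Let $L=P\times_{S^1}\mathbb{C}$ be the associated Hermitian line bundle; off the zero section $L\setminus 0\cong(0,\infty)\times P$ with radial coordinate $r$, and the fibre-rotation action becomes the principal action on $P$.

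Next I would equip the disc bundle $W=\{r\le r_0\}$ with the minimal-coupling (Sternberg--Weinstein) form. Fixing $a>0$ and setting $f(r)=a+\tfrac12 r^2$, I define $\omega=d\big(f(r)\,\alpha\big)=f'(r)\,dr\wedge\alpha+f(r)\,\pi^*\omega_\Sigma$ on $L\setminus 0$. Writing the fibre coordinate as $z=re^{i\phi}$ and using $r\,dr\wedge d\phi=dx\wedge dy$, one checks that this choice of $f$ makes $\omega$ extend smoothly across the zero section, where it restricts to $a\,\omega_\Sigma$; thus $\omega$ is closed (being exact off a nowhere-dense set) and $\omega\wedge\omega=2f'(r)f(r)\,dr\wedge\alpha\wedge\pi^*\omega_\Sigma$ is nowhere zero. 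Hence $(W,\omega)$ is symplectic and the zero section is a symplectic surface diffeomorphic to $\Sigma$.

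I would then read off the dynamics and the boundary. The generator $X=\partial_\phi$ of the rotation is $S^1$-invariant and satisfies $\iota_X\omega=-f'(r)\,dr=-dH$ with $H=f(r)$, so the action is Hamiltonian; since $dH=r\,dr$ vanishes exactly at $r=0$, the only critical set is the zero section (a Morse--Bott minimum of even index), the prospective maximum having been cut away by the boundary. For the boundary $\{r=r_0\}\cong P$ I would exhibit the outward Liouville field $Y=\tfrac{f(r)}{f'(r)}\partial_r$, which satisfies $\iota_Y\omega=f(r)\,\alpha$ and restricts on $\{r=r_0\}$ to the contact form $(a+\tfrac12 r_0^2)\,\alpha$. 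Hence the boundary is of convex contact type, $X$ is tangent to it, and the construction delivers exactly the required Hamiltonian $4$-manifold when $\Sigma$ is closed.

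Finally I would treat $\Sigma$ with non-empty boundary, where $H^2(\Sigma;\mathbb{Z})=0$ forces $L$ to be trivial, so the same formulas apply with $P=\Sigma\times S^1$ and $\alpha=d\phi+\pi^*\beta$ for any primitive $\beta$ of $\omega_\Sigma$; I would take $\beta$ to make $(\Sigma,\beta)$ a $2$-dimensional Liouville domain with outward field along $\partial\Sigma$. The resulting $W=\Sigma\times D^2$ now has a corner where $\Sigma\times\partial D^2$ meets $\partial\Sigma\times D^2$, and the main obstacle is to smooth this corner while preserving the contact type property: near the two faces the Liouville fields are $\tfrac{f}{f'}\partial_r$ and the lift of the Liouville field of $\Sigma$, and I would round the corner $S^1$-equivariantly so that they patch to a single outward-pointing Liouville field, using that both faces are of contact type with compatible Liouville directions. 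Because everything in sight is $S^1$-invariant, $X$ stays tangent to the smoothed boundary and the action remains Hamiltonian, while the critical set is still the zero section, now a copy of $\Sigma$ whose boundary $\partial\Sigma$ lies in $\partial W$, exactly as allowed for critical sets in this setting.
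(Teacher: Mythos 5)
Your construction in the closed case is correct and is essentially the paper's own: the paper builds the same associated disc bundle, presented as the quotient of $V\times D^2$ by the anti-diagonal circle, with symplectic form $\rho\,d\rho\wedge(\alpha+dt)+\tfrac{1+\rho^2}{2}\,d\alpha$, Liouville field $\tfrac{1+\rho^2}{2\rho}\,\partial_\rho$ and Hamiltonian $\rho^2/2$ --- i.e.\ exactly your $d\bigl(f(r)\alpha\bigr)$ with $f(\rho)=\tfrac{1+\rho^2}{2}$.

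The case $\partial\Sigma\neq\emptyset$ contains a genuine gap. With your choice $\omega=d\bigl(f(r)(d\phi+\beta)\bigr)=r\,dr\wedge(d\phi+\beta)+f(r)\,\omega_\Sigma$, neither of the two ``face Liouville fields'' you invoke does what you claim. The field $\tfrac{f}{f'}\partial_r$ is indeed Liouville, but it is \emph{tangent} to the face $\partial\Sigma\times D^2$, so it is useless there; and the lift of $Y_\Sigma$ is not Liouville (not even symplectic) for this $\omega$, since $L_{Y_\Sigma}\omega=r\,dr\wedge\bigl(\beta+d(\beta(Y_\Sigma))\bigr)+f\,\omega_\Sigma\neq 0$, so adding it to $\tfrac{f}{f'}\partial_r$ destroys the Liouville condition. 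Worse, the side face is not of contact type for the natural primitive $\lambda=f(d\phi+\beta)$ at all: $\lambda\wedge d\lambda=f^2\,d\phi\wedge\omega_\Sigma$, which vanishes identically on $T(\partial\Sigma\times D^2)$ because $\omega_\Sigma$ needs two directions tangent to $\Sigma$ and only one is available on that face. Hence the premise of your corner rounding --- ``both faces are of contact type with compatible Liouville directions'' --- fails for your form, and the patching step cannot be carried out as described. The repair is to drop the twisted primitive in this case and use the honest product structure, which is what the paper does: on $\Sigma\times D^2$ take $\omega_\Sigma\oplus\tfrac{i}{2}dz\wedge d\bar z$ with the product Liouville field $Y_\Sigma\oplus\nabla(|z|^2)$, which is outward along both faces, and smooth the corner explicitly and $S^1$-invariantly by passing to the sublevel set $F^{-1}([0,1])$ of $F=\widetilde{|z|^2}+\tilde g$, where $g=1+t/\epsilon$ on a collar $(-\epsilon,0]\times\partial\Sigma$ and $g=0$ away from it; transversality of the Liouville field to $F^{-1}(1)$ then follows from $dF(Y)=dg(Y_\Sigma)+d(|z|^2)\bigl(\nabla(|z|^2)\bigr)>0$. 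With that substitution your argument goes through and coincides with the paper's.
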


These manifolds are constructed using circle and disc bundles over surfaces, with the requirement of smoothing the corners when the underlying surface has a non-empty boundary.

\vskip2mm

We further describe some methods of constructing new Hamiltonian 4-manifolds with a contact type boundary starting from already existing ones. 
In Section \ref{section:handle}  we introduce the attachment of a Weinstein 2-handle equipped with a Hamiltonian circle action. The main result is the following.

\begin{theorem}\label{theorem3}
Suppose that a symplectic 4-manifold with a contact type boundary admits a Hamiltonian circle action with  Legendrian orbits. Then, there exists a Weinstein handle attachment such that obtained symplectic 4-manifold with a contact type boundary also admits a Hamiltonian circle action.

\end{theorem}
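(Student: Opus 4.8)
The plan is to attach the Weinstein $2$-handle along a single Legendrian orbit $\Lambda\subset V$ and to equip the handle with its own Hamiltonian circle action that agrees, along the attaching region, with the given action near $\Lambda$. Recall that in dimension four the model Weinstein $2$-handle sits inside $(\mathbb{R}^4,\omega_{\mathrm{std}})$ with coordinates $(p_1,q_1,p_2,q_2)$, $\omega_{\mathrm{std}}=dp_1\wedge dq_1+dp_2\wedge dq_2$, and Liouville vector field
\[
 Y=\sum_{i=1}^{2}\bigl(2p_i\,\partial_{p_i}-q_i\,\partial_{q_i}\bigr),
\]
whose stable (core) plane is $\{p_1=p_2=0\}$ and whose unstable (cocore) plane is $\{q_1=q_2=0\}$. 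The attaching sphere is the circle $\{q_1^2+q_2^2=\text{const},\ p_1=p_2=0\}$, and a short computation with the contact form $\iota_Y\omega_{\mathrm{std}}$ shows that it is Legendrian.

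The crucial point is to rotate the core plane and the cocore plane simultaneously, i.e.\ to let $s\in S^1$ act by
\[
 s\cdot(p_1,q_1,p_2,q_2)=\bigl(R_s(p_1,p_2),\,R_s(q_1,q_2)\bigr),
\]
where $R_s$ denotes rotation by the angle $s$. One checks directly that this action preserves $\omega_{\mathrm{std}}$ and is Hamiltonian, with moment map $\mu=p_1q_2-p_2q_1$. The decisive feature is that it also preserves the Liouville field $Y$: since $Y$ acts as the scalar $-1$ on the $q$-plane and as $+2$ on the $p$-plane, while $R_s$ preserves each of these two planes, the generator of the action commutes with $Y$. Hence the action restricts to the contact boundary preserving the contact form, and on the attaching circle it is exactly the rotation of that Legendrian; in other words the attaching Legendrian is itself a Legendrian orbit of the handle action, precisely matching the role of $\Lambda$.

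It remains to glue equivariantly. Because $\Lambda$ is a Legendrian orbit the action rotates it freely, so an $S^1$-equivariant Legendrian neighbourhood theorem identifies a neighbourhood of $\Lambda$ in $(V,\xi)$, together with its Liouville collar in $W$, with the corresponding piece of the model above, intertwining the given circle action with the model rotation. After rescaling the speeds so that the two actions share a common period, the handle $\mathcal{H}$ is attached along this region; the two circle actions agree on the overlap and therefore patch to a global circle action on $W'=W\cup\mathcal{H}$ that preserves $\omega$. The Weinstein attachment extends the Liouville field transversally across the new boundary, so $W'$ again has a contact type boundary, and the resulting symplectic circle action is automatically Hamiltonian by \cite[Theorem A.]{MN}. (The zero of $Y$ becomes a new interior fixed point, with weights $+1,-1$ and hence even index $2$, as it must be.) I expect the main obstacle to be exactly this equivariant matching step: one must establish the $S^1$-equivariant Legendrian neighbourhood theorem along $\Lambda$ and verify that the model rotation can be reparametrised to coincide with the given Hamiltonian action on the attaching region, rather than merely being conjugate to it.
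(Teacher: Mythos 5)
Your overall strategy is exactly the one the paper follows: put the unique Liouville-form-preserving circle action on the standard Weinstein $2$-handle (the simultaneous rotation of the core and cocore planes, with moment map $p_1q_2-p_2q_1$ and a single interior fixed point of index $2$), observe that the attaching sphere is then a Legendrian orbit of the handle action, and glue it equivariantly onto a Legendrian orbit of the given manifold, extending over the Liouville collars. Your verification of the handle-side data is correct.

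The genuine gap is the one you flag yourself and then do not close: the $S^1$-equivariant standard neighbourhood theorem for the Legendrian orbit $\Lambda\subset V$. This is not a routine citation --- it is the technical core of the paper's argument (Step 1 of its proof). The paper proves it as follows: since $\Lambda$ is a free orbit, the slice theorem gives an invariant $S^1\times D^2$ neighbourhood with linear action on the first factor; because Legendrian orbits are never isolated but sweep out tori (Lemma \ref{lemma:legendrian_tori}), one may shrink the disc so that the invariant contact form can be normalized to $H(x,y)\,dt+f(x,y)\,dx+dy$ with $H(0,y)=0$; the linear interpolation to $\alpha_0=x\,dt+dy$ then stays contact, and the Gray-stability vector field $X_s$ is shown to commute with the action generator, so the resulting isotopy is equivariant. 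Without some such argument your identification ``intertwining the given circle action with the model rotation'' is an assertion, not a proof. Two smaller points: the ``rescaling of speeds'' you propose is unnecessary and in fact unavailable --- a Legendrian orbit is a free orbit of the fixed $S^1=\mathbb R/2\pi\mathbb Z$ action, so the action on the attaching region is already the standard unit-speed rotation in the orbit coordinate, and the matching must be achieved by the choice of contactomorphism, not by reparametrising the circle. Finally, your appeal to \cite[Theorem A]{MN} to conclude Hamiltonianness of the glued action is a legitimate shortcut; the paper instead checks directly that $\lambda(X_h)$ and $\alpha_0(X_W)$ agree on the overlap, which has the advantage of exhibiting the global Hamiltonian explicitly, but either route works once the equivariant gluing is in place.
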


To prove this theorem, we first equip the standard Weinstein 2-handle with a Hamiltonian circle action. Then, we attach this handle by gluing a Legendrian orbit of the handle (i.e an attaching sphere)  to a Legendrian orbit of the given Hamiltonian manifold, so that the attaching map and the corresponding extension to a collar neighborhood are invariant under the given action. In this way we obtain a global Hamiltonian circle action on the obtained manifold. This action admits one new fixed point and two gradient discs that connect this point with the boundary.  

\vskip2mm

Finally, in Section \ref{section:blow} we briefly recall the procedure of a blow up of an interior critical point, what is also explained in  \cite[Section 6]{Karshon}) for closed Hamiltonian 4-manifolds.
This procedure can be analogously applied to Hamiltonian 4-manifolds with a contact type boundary, by choosing sufficiently small size of the blow up, so that the boundary remains untouched.

\begin{theorem}\label{theorem4}
Suppose that a symplectic 4-manifold with a contact type boundary admits a Hamiltonian circle action with  at least one fixed point (isolated or not). Then, by performing a sufficiently small blow up of a fixed point  one obtains a symplectic 4-manifold with a contact type boundary that also admits a Hamiltonian circle action.

\end{theorem}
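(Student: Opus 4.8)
The plan is to reduce Theorem \ref{theorem4} to the corresponding local statement for the closed case, by showing that a small blow up of a fixed point can be performed entirely in the interior of $W$, away from the contact type boundary $V=\partial W$. The key structural fact to exploit is that the symplectic blow up is a local surgery: it replaces a symplectically embedded standard ball (or a neighborhood of the fixed surface) by the corresponding line bundle, and everything happens inside a Darboux-type chart around the chosen fixed point. So the first step is to separate the two cases in the hypothesis and set up the appropriate equivariant normal form near the fixed point.

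First I would treat an isolated fixed point $p$ in the interior of $W$. Since the action is Hamiltonian with $H$ Morse--Bott and even indices (as recorded in the list of properties in the introduction), there is an invariant almost complex structure $J$ and an invariant metric, and by the equivariant Darboux/Weinstein theorem one obtains an $S^1$-equivariant symplectomorphism from a neighborhood of $p$ onto a neighborhood of $0$ in $(\mathbb{C}^2,\omega_{\mathrm{std}})$ on which the circle acts linearly with integer weights $(m,n)$, $z\mapsto (t^m z_1, t^n z_2)$. Because $p$ lies in the \emph{interior} of $W$, there is a positive distance $d(p,V)>0$, and one can choose the ball radius $r$ of the chart so small that the closed ball $\bar B(p,r)$ is disjoint from a collar neighborhood of $V$. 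For the non-isolated case (a fixed surface $\Sigma$ in the interior) the same argument applies with the equivariant symplectic normal bundle of $\Sigma$ playing the role of the linear model, again choosing the fiber radius small enough that the tubular neighborhood stays away from $V$.

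Next I would perform the standard symplectic blow up of size $\varepsilon < r$ inside that chart, exactly as in \cite[Section 6]{Karshon}: excise the small ball/tube and glue in the tautological disc bundle, equipping the exceptional divisor with the induced circle action (a rotation of the fiber with the inherited weights). This yields a new symplectic manifold $(\tilde W,\tilde\omega)$ together with an $S^1$-action that agrees with the original one outside the chart. The Hamiltonian $H$ is unchanged outside the blow up region and extends smoothly over the new exceptional set as a Morse--Bott function whose critical data is the standard blow up picture (the fixed point is replaced by an exceptional sphere and possibly new isolated fixed points, with even indices preserved). Since all the modification occurs in $\bar B(p,r)$, which is disjoint from the collar of $V$, the boundary $V$ together with its contact type Liouville vector field $X$ (with $\iota_X\omega=\lambda$, $d\lambda=\omega$ near $V$) is literally untouched, so $(\tilde W,\tilde\omega)$ still has $V$ as a contact type boundary and the restricted action there is unchanged.

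The main obstacle, and the only point requiring genuine care, is verifying that the cut-and-glue is compatible with the \emph{global} Hamiltonian structure rather than merely the local one: one must check that the globally defined moment map $H$ and the global primitive of $\omega$ near $V$ survive the surgery, i.e. that the blow up can be carried out with the symplectic form interpolating through a region where $H$ remains a genuine moment map for the $S^1$-action (this is where the "sufficiently small size" hypothesis is used, to keep the support of the modification inside a level set $\{H<c\}$ that is itself disjoint from $V$). This is precisely the content that is already handled in the closed case in \cite[Section 6]{Karshon}; because our modification is supported in an interior coordinate ball, the closed-case construction applies verbatim, and the presence of the boundary adds no new analytic difficulty once the disjointness from $V$ is arranged. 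I would conclude by remarking that the resulting action is Hamiltonian (the extended $H$ is its moment map) and that $V$ remains a contact type boundary, which is exactly the assertion of the theorem.
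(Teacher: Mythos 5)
For the case of an isolated interior fixed point your argument is essentially the paper's: equivariant Darboux gives the linear model $(D^4,\omega_{st})$ with weights $(m,n)$, the blow up of size $\epsilon$ smaller than the chart radius is a purely local surgery (remove a small ball, collapse $S^3_\epsilon$ along the Hopf fibres), the linear action descends to $D^4\sharp\mathbb{CP}^2$, and since the whole modification is supported in an interior coordinate ball the collar of $V$ and its Liouville vector field are untouched. Your worry about a ``global primitive of $\omega$ near $V$'' is unnecessary --- no exactness is assumed, only a Liouville field near the boundary, which lives outside the surgery region; and once the new action is symplectic and the boundary is still of contact type, Hamiltonian-ness is automatic by \cite[Theorem A]{MN}, so extending $H$ over the exceptional sphere is not even the delicate point you make it out to be.

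The genuine problem is your treatment of the non-isolated case. The theorem asks for a blow up \emph{of a fixed point}, and when that point lies on a critical surface $\Sigma$ the correct operation is still a \emph{point} blow up: equivariant Darboux at an interior point of $\Sigma$ gives weights $(m,0)$ with $m=\pm1$, one blows up at the origin of that chart, and the result is a surface still homeomorphic to $\Sigma$ together with one new isolated fixed point $[\epsilon:0]$ and a free exceptional gradient sphere. Instead you propose to replace a tubular neighborhood of all of $\Sigma$ by a disc bundle, ``choosing the fiber radius small enough that the tubular neighborhood stays away from $V$.'' This fails on two counts. First, blowing up along a codimension-two symplectic submanifold of a $4$-manifold is a different (and topologically trivial) operation from the point blow up the theorem requires. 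Second, your parenthetical assumption that $\Sigma$ lies in the interior of $W$ is unwarranted: as recorded in Section \ref{section:properties} (and visible already in Example \ref{example:disc} with $mn=0$), critical surfaces may have non-empty boundary contained in $\partial W$, so no tubular neighborhood of $\Sigma$ can be kept disjoint from $V$. The point blow up avoids both difficulties, since an interior point of $\Sigma$ is an interior point of $W$ regardless of whether $\partial\Sigma\subset\partial W$ is non-empty.
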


\section{Properties of Hamiltonian manifolds with a contact type boundary}\label{section:properties}

In this section we review the basic properties of Hamiltonian manifolds with a contact type boundary with an accent on such 4-manifolds.
\vskip2mm
Let $(W,\omega)$ be a symplectic manifold with a convex contact type boundary $V$. Then,  there exists a Liouville vector field $Y$ defined in the neighborhood of the boundary $V$ that is transversal to the boundary and points out of the boundary. The Liouville condition $L_Y\omega=\omega$ and Cartan's formula imply that the vector field $Y$ defines a contact form $\alpha=\iota_Y\omega$ on $V$ where $\omega=d\alpha$.

Suppose $(W,\omega )$ is equipped with a Hamiltonian circle  $S^1=\mathbb R/2\pi \mathbb Z$ action. That is, if $X$ denotes the vector field that generates this circle action, then there is a family of smooth functions $H:W\to\mathbb R$, called Hamiltonian functions, that differ by a constant, such that
$$\iota_X\omega=-dH.$$ 
 As shown in \cite[Theorem B]{MN}, any such Hamiltonian function $H$ is a Morse-Bott function, whose critical sets are precisely the fixed point sets of the action.

In general, a Liouville vector field $Y$ may not be invariant under this action, however, we can always average it in order to obtain an invariant Liouville vector field:
$$Y_{inv}=\int_{S^1}(\theta_{\ast}Y)d\theta.$$

We keep denoting $Y$ this invariant Liouville vector field. Then, the induced contact form $\alpha=\iota_Y\omega$ is also invariant. Thus, 
by  Cartan's formula, it follows that
$0=L_X\alpha=\iota_Xd\alpha+d(\iota_X\alpha)$, i.e.
$$\iota_X\omega=-d(\alpha(X)).$$
Therefore, $\alpha(X)$ is a Hamiltonian function on the boundary $V$ and there is unique Hamiltonian function $H$ on  $W$ whose restriction to the boundary is $\alpha(X)$ (that depends on the choice of an invariant contact form). Through out the article we work with this Hamiltonian function on $W.$

\vskip1mm

Further, by averaging any Riemannian metric on $W$ we obtain an invariant Riemannian metric 

$$g_{inv}=\int_{S^1}(\theta^*g) d\theta.$$
We denote this invariant metric by $g.$
Having such a metric, we may observe the gradient  vector field $\nabla H$ defined by the relation $dH(\cdot)=g(\nabla H,\cdot)$. The corresponding gradient trajectories of $\nabla H$,
denoted by $\Phi_t^{\nabla H}$, can start or end in a critical point of $H$ or at the boundary. 

A Hamiltonian function $H$ is constant on orbits, as $dH(X)=0,$ however, $H$ increases along any trajectory $\Phi_t^{\nabla H},$ as $dH(\nabla H)>0.$

\begin{remark}

  If $g$ is an invariant metric,  then, induced almost complex structure $J$ on $TW$ (see\cite[Proposition 2.5.6]{MS})
is also  invariant, i.e.
  $\theta^*J=J,$ for every $\theta\in S^1.$ Note that
$dH(\cdot)=g(\nabla H,\cdot)=\omega(\nabla H,J\cdot)=\omega(J\nabla H, J^2\cdot)=-\omega(J\nabla H,\cdot)$
  thus, from the relation   $dH(\cdot)=-\omega(X,\cdot)$ and the non-degeneracy of the symplectic form, we conclude
  $$X=J\nabla H\hskip2mm\textrm{and}\hskip2mm\nabla H=-JX.$$
  It follow that
  $L_X\nabla H=(L_XJ)X+JL_XX=0$
  and therefore the flows of $X$ and $\nabla H$ commute. The union of any $S^1$-orbit together with all the flow lines of $\nabla H$ along that orbit is a gradient $J$-holomorphic cylinder. The manifold $W$ is foliated by such $J$-holomorphic cylinders. Equivalently, Hamiltonian $S^1$ action and $\nabla H$ defines J-holomorphic $\mathbb C^*$ action on $W$.
   Since any gradient trajectory starts and ends in a critical point or at the boundary, these $J$-holomorphic orbits are diffeomorphic to the spheres, discs or annuli.
Finally,  the stabiliser of all points in one orbit is the same. If the stabiliser is trivial, we call such an orbit a free gradient curve, while if the stabiliser is $\mathbb Z_k$  we call such an orbit a $\mathbb Z_k$ gradient curve. 
\end{remark}
\subsection{Invariant collar neighborhood}\label{section:max}

A symplectic manifold $(W,\omega)$ with a contact type boundary $V$ and a Liouville vector field $Y$  admits a boundary collar that is
  diffeomorphic to $(-\epsilon,0]\times V$. This diffeomorphism can be constructed using the flow $(s,p)\mapsto\Phi_s^{Y}(p)$ of the vector field $Y$. Therefore,
  
  \begin{itemize}
  \item $V$ is  identified with $\{0\}\times V$, and $Y$ corresponds to $\partial_s$;
  \item the Liouville form $\iota_Y\omega$ is diffeomorphic to $e^s\alpha$ in this collar  and the
    symplectic structure~$\omega$ is symplectomorphic to
    $d(e^s\alpha)$;
  \end{itemize}
  
  \vskip2mm
  
  A Hamiltonian circle action has a very nice description in this collar neighborhood. Namely,
   if a circle acts symplectically on a manifold $(W,\omega)$ with a contact
type boundary~$V $, then we can choose 
 an invariant Liouville vector field $Y$ and an invariant contact form
$\alpha=\iota_Y\omega$ on $V$ such that the boundary collar of
$W$ is equivariantly diffeomorphic to 

  \begin{equation}  \label{eq:collar}
   ( (-\epsilon, 0] \times V, \omega = d(e^s\alpha)),
  \end{equation}
  
   and such that the circle action agrees with
     \begin{equation}   \label{eq:action}  
\theta\ast (s,p) = (s, \theta \ast p),   \hskip1mm  \textrm{for any }  \hskip1mm \theta\in S^1   \hskip1mm  \textrm{and any}
  \hskip1mm (s,p) \in (-\epsilon,0]\times V. 
  \end{equation}
 The Hamiltonian function~$H$ associated to this circle action and the Liouville
form~$e^s \alpha$ simplifies in this neighborhood to
$$H(s,p) = e^s\, \alpha_p(X_V) .$$

 Choose an invariant Riemannian metric~$g$ on $W$ that agrees with
  \begin{equation*}
    {g}|_{(-\epsilon,0] \times V} = e^s\cdot \bigl(ds^2 \oplus g_V\bigr) \;,
  \end{equation*}
  on the collar $(-\epsilon,0] \times V$, where $g_V$ is any invariant metric on $V.$
  Then, we have the splitting
$TW=TV\oplus Y,$ that is, $g(v,Y)=0,$ for every vector $v\in TV.$     
  
  \vskip2mm

On the collar, the gradient vector filed
  $\nabla H$ is given, with respect to $g$, by
  \begin{equation}\label{eq:nabla_H}
      \nabla H(s,p) =  H|_V(p)\, \partial_s + \nabla H|_V(p). \;
  \end{equation}

\begin{lemma}\label{lemma:sign of H}
For every boundary point $p\in V$ with $H(p) \neq0$ the gradient trajectory $\Phi_t^{\nabla H}(p)$ is non-constant.  If $H(p)<0$ ($H(p)>0$) the trajectory $\Phi_t^{\nabla H}(p)$ starts (ends) in $p.$ Moreover, every trajectory can intersect the boundary only in end points.

\end{lemma}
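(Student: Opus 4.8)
The plan is to work entirely inside the invariant collar $(-\epsilon,0]\times V$, where the explicit formula \eqref{eq:nabla_H} for $\nabla H$ is available; since every intersection of a trajectory with $V=\{0\}\times V$ occurs there, this loses nothing. I would write a gradient trajectory as $\gamma(t)=(s(t),p(t))$ for as long as it remains in the collar. By \eqref{eq:nabla_H} the flow equations decouple,
$$\dot s(t)=H|_V(p(t)),\qquad \dot p(t)=\nabla H|_V(p(t)),$$
so the $V$-component $p(t)$ is exactly the gradient flow of $H|_V$ on $(V,g_V)$, independent of the $s$-variable. The first assertion is then immediate: if $H(p)\neq0$ for $p\in V$, the $\partial_s$-component of $\nabla H(0,p)$ equals $H|_V(p)=H(p)\neq0$, so $\nabla H(p)\neq0$, $p$ is not a critical point, and the trajectory through it is non-constant.

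The key observation, which I would establish next, is that $s(t)$ is a convex function: differentiating the first flow equation and inserting the second gives
$$\ddot s(t)=\frac{d}{dt}H|_V(p(t))=d\bigl(H|_V\bigr)\bigl(\nabla H|_V(p(t))\bigr)=\bigl|\nabla H|_V(p(t))\bigr|_{g_V}^{2}\ge0.$$
Normalising so that $\gamma(0)=p\in V$, hence $s(0)=0$ and $\dot s(0)=H(p)$, the sign of $H(p)$ controls the direction of the trajectory. If $H(p)>0$ then $s$ is increasing at $0$, so $s(t)<0$ (inside $W$) for small $t<0$ and $s(t)>0$ (outside $W$) for small $t>0$; thus the maximal trajectory in $W$ terminates at $p$. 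If $H(p)<0$ the signs reverse and $p$ is the initial point. This yields the second assertion.

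For the last assertion I would argue by contradiction: suppose a non-constant trajectory meets $V$ at an interior time $t_0$ of its domain, i.e.\ $s(t_0)=0$ with $t_0$ not an endpoint. Since $s\le0$ on $W$ and $s$ is convex, for any $t_1<t_0<t_2$ in the domain one has $0=s(t_0)\le\lambda s(t_1)+(1-\lambda)s(t_2)\le0$ with $\lambda\in(0,1)$, forcing $s(t_1)=s(t_2)=0$. As this holds for all such $t_1,t_2$, we get $s\equiv0$, hence $\dot s\equiv0$ and $H|_V(p(t))\equiv0$; then $\ddot s\equiv0$ forces $\nabla H|_V(p(t))\equiv0$, so $\nabla H(\gamma(t))\equiv0$ and the trajectory is constant, contradicting non-constancy. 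Therefore a non-constant trajectory meets $V$ only at the endpoints of its domain.

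The transversal cases $H(p)\neq0$ are routine, being settled by the sign of $\dot s(0)$ alone. The one step that requires genuine care — and the main obstacle — is the tangential case $H(p)=0$, i.e.\ the Legendrian orbit points, where $\nabla H$ is tangent to $V$ and transversality gives no information whatsoever; it is precisely here that the convexity of $s(t)$ must be invoked to exclude an interior boundary touch.
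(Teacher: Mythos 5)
Your proof is correct, and it follows the same overall route as the paper: both arguments live in the invariant collar and read off the behaviour of a trajectory at a boundary point from the sign of $H$ via the formula \eqref{eq:nabla_H}, which settles the first two assertions identically in both treatments. Where you differ is in how the delicate tangential case of the last assertion is excluded. The paper argues at first order: a tangential touch at $p$ forces $g_p(Y,\nabla H)=0$, hence $H(p)=0$ by the identity $dH(Y)=\alpha(X)=H$ on $V$; strict monotonicity of $H$ along a non-constant trajectory then gives $H<0$, hence $\dot s<0$, just before the touch, so the trajectory would have had to come from the forbidden region $s>0$. You instead observe that the collar flow decouples and that $s(t)$ is convex, $\ddot s=\bigl\Vert \nabla H|_V\bigr\Vert^2_{g_V}\ge 0$, so that a zero of $s$ at an interior time, together with $s\le 0$, forces $s\equiv 0$ near that time and hence $\nabla H=0$ along the trajectory, contradicting non-constancy. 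The two mechanisms are equivalent in substance ($\dot s=H|_V\circ p$ being non-decreasing is exactly the monotonicity the paper invokes), but your packaging is self-contained --- it needs only the decoupled ODE rather than the separate identity $dH(Y)=H$ and the general fact that $H$ increases along gradient lines --- and it disposes of the tangential touch in a single convexity step. Two minor presentational points, neither a gap: the convexity argument should be stated for the portion of the trajectory that lies in the collar (which suffices, since the collar is a neighbourhood of $V$ and one only needs $s\equiv 0$ near $t_0$ to conclude $\nabla H(\gamma(t_0))=0$); and when $H(p)>0$ the forward flow is not defined inside $W$ at all, so rather than saying $s(t)>0$ for small $t>0$ one should say the trajectory cannot be continued past $t=0$ within $W$.
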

\begin{proof} 
According to equation (\ref{eq:nabla_H}) it follows that $H(p)>0,$ ($H(p)<0$) for any $p\in V,$ if and only if $\nabla H(p)$
  points transversally out of (into) the  boundary.

Let us show that every gradient trajectory $\Phi_t^{\nabla H}(x)$ that hits  the boundary from the interior of $W$ in a positive time direction hits it transversally in a point and it stops. The proof is analogous for the negative time direction.
If the trajectory $\Phi_t^{\nabla H}(x),$ for some $x$ in the interior of $W,$ hits the boundary in time $t_0>0$ in a point $p\in V$ tangentially (i.e. $\Phi_{t_0}^{\nabla H}(x)=p$), then $g_p(Y,\nabla H)=0$. Thus,   $dH_p(Y)=0$
and, using the relations 
$dH(Y)=\omega(Y,X)=\alpha(X)=H$ on  $V$
it follows that
 $H(p)=0.$ However, then, $H\circ\Phi_t^{\nabla H}(x)<0$
for all time $t<t_0$, as the value of $H$ increases along any trajectory. Therefore, $\nabla H(\Phi_t^{\nabla H}(x))$ points into $W$, for $t<t_0,$ what is not possible, as we assumed that the trajectory $\Phi_t^{\nabla H}(x)$  hits  the boundary in a positive time direction in $p$, i.e. 
 $\nabla H(\Phi_t^{\nabla H}(x))$
  points  out of the boundary, for $t<t_0$ close to $t_0$.  
  Similarly, for the negative time direction (i.e. when $t>t_0$).
  
  \end{proof}

    We also have the following result.

\begin{lemma}\label{lemma:H=0}
For every boundary point $p\in V$ with $H(p)=0$  the gradient trajectory $\Phi_t^{\nabla H}(p)$ is constant.
\end{lemma}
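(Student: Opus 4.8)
The plan is to work entirely in the equivariant collar \eqref{eq:collar}, where $Y=\partial_s$, the manifold corresponds to $\{s\le 0\}$ (the outward Liouville direction being $+\partial_s$), and $\nabla H$ is given by \eqref{eq:nabla_H}. The first step is to record that at a boundary point $p$ with $H(p)=0$ the gradient is \emph{tangent} to $V$. Indeed, using the identity $dH(Y)=\omega(Y,X)=\alpha(X)=H$ on $V$ already exploited in the proof of Lemma \ref{lemma:sign of H}, one gets $dH_p(Y)=H(p)=0$, hence $g_p(\nabla H,Y)=0$; since $g(\,\cdot\,,Y)$ vanishes on $TV$ and $g(Y,Y)>0$, the $\partial_s$-component of $\nabla H(p)$ must vanish. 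Equivalently, \eqref{eq:nabla_H} gives $\nabla H(p)=\nabla H|_V(p)\in T_pV$, because the coefficient $H|_V(p)$ of $\partial_s$ is zero.

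The heart of the argument is then a second-order computation. Suppose, for contradiction, that the trajectory is non-constant, i.e. $\nabla H|_V(p)\neq 0$, and write the integral curve of $\nabla H$ through $p$ (which extends smoothly to a slightly larger collar, the vector field in \eqref{eq:nabla_H} being $s$-independent) as $(s(t),q(t))$ with $s(0)=0$, $q(0)=p$. By \eqref{eq:nabla_H} we have $\dot s(t)=H|_V(q(t))$ and $\dot q(t)=\nabla H|_V(q(t))$, so
\[
\frac{d}{dt}\,H|_V(q(t))=dH|_V\bigl(\nabla H|_V(q(t))\bigr)=g_V\bigl(\nabla H|_V(q(t)),\nabla H|_V(q(t))\bigr)\ge 0 .
\]
Thus $\dot s(0)=H|_V(p)=0$ while $\ddot s(0)=g_V\bigl(\nabla H|_V(p),\nabla H|_V(p)\bigr)>0$, and Taylor's formula yields $s(t)=\frac{1}{2}\,\ddot s(0)\,t^2+o(t^2)>0$ for all small $t\neq 0$. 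This contradicts $s\le 0$ on $W$: the integral curve escapes $W$ on both sides of $t=0$. Hence no non-constant trajectory through $p$ can remain in $W$, so $\Phi_t^{\nabla H}(p)$ is forced to be constant.

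I expect the main obstacle to be exactly this tangency. Because $\nabla H(p)$ lies in $T_pV$, the first-order transversality dichotomy of Lemma \ref{lemma:sign of H} does not apply at such points, so one genuinely needs the \emph{second-order} sign of $s(t)$, which is supplied by the monotonicity of $H$ along the flow. An alternative packaging avoids the explicit Taylor expansion: a non-constant trajectory through $p$ would meet $V$ tangentially at $p$, and the proof of Lemma \ref{lemma:sign of H} already excludes tangential intersections of trajectories with the boundary in either time direction, which again forces the trajectory to be constant. Either route reduces to the same delicate point, so the only thing requiring care in the write-up is the sign bookkeeping in the collar, where $\{s\le 0\}$ is the interior and $+\partial_s$ points outward.
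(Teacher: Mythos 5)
Your proof is correct and takes essentially the same route as the paper: both reduce to the collar formula \eqref{eq:nabla_H}, observe that $H(p)=0$ makes $\nabla H(p)$ tangent to $V$, and use the monotonicity of $H$ along the flow to rule out the trajectory entering $\{s<0\}$ or sliding along the boundary. The paper's version first splits into fixed points and regular points with Legendrian orbits and, for the latter, simply invokes the exclusion of tangential boundary intersections established in the proof of Lemma \ref{lemma:sign of H} (``as shown above''); your explicit second-order Taylor expansion $s(t)=\tfrac12 g_V\bigl(\nabla H|_V(p),\nabla H|_V(p)\bigr)t^2+o(t^2)$ makes that step quantitative and treats both cases uniformly, which is a mild sharpening rather than a different argument.
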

\begin{proof} Let us show that there are  two types of boundary points where $H$ vanishes, fixed points of the circle action and the regular points whose orbits are isotropic (i.e. tangent to the contact structure). 

If $p$ is a fixed point of the circle action, then $X(p)=0$ and then $\iota_X\omega=-dH=0$ in $p.$ Further, from the relation $dH(Y)=\omega(Y,X)=\alpha(X)=H$ it follows that $H(p)=0.$
It is clear that the flow $\Phi_t^{\nabla H}(p)$ is constant.

If $p$ is a regular point (i.e. $X(p)\neq0$) with $H(p)=0$, then $\alpha(X_p)=0$. That is, $X(p)$ is tangent to the contact structure $\ker\alpha.$ Similarly, for any other point $\theta\ast p$ in the same orbit it holds $\alpha(X_{\theta\ast p})=\theta^{\ast}\alpha(X_p)=0$ and therefore the whole orbit in $p$ is tangent to $\ker\alpha.$
As shown above, the trajectory $\Phi_t^{\nabla H}(p)$ cannot enter the interior of $W$ nor it can move along the boundary, therefore it has to stay constant.
\end{proof}

\begin{corollary}
Gradient trajectories of $\nabla H$ never converge to a boundary critical point. 

\end{corollary}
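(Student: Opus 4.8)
The plan is to establish the corollary directly from the two preceding lemmas, which together exhaust the behavior of gradient trajectories at the boundary. The statement asserts that no gradient trajectory of $\nabla H$ converges to a boundary critical point, so first I would recall what a boundary critical point is: a point $p \in V$ that is a fixed point of the circle action, hence satisfies $X(p) = 0$. By the argument in the proof of Lemma \ref{lemma:H=0}, every such point has $H(p) = 0$, since the relation $dH(Y) = \omega(Y,X) = \alpha(X) = H$ on $V$ forces $H$ to vanish wherever $X$ does.

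With that identification in hand, the core of the argument is to invoke Lemma \ref{lemma:H=0}: for any boundary point $p$ with $H(p) = 0$ — in particular for any boundary critical point — the gradient trajectory $\Phi_t^{\nabla H}(p)$ is \emph{constant}. Thus a boundary critical point $p$ is itself a rest point of the gradient flow; it sits on no non-constant trajectory passing through it along the boundary, and by equation (\ref{eq:nabla_H}) the gradient $\nabla H(p) = H|_V(p)\,\partial_s + \nabla H|_V(p)$ has vanishing normal component precisely because $H(p)=0$, so the flow cannot carry an interior trajectory into $p$ transversally either.

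The main step, then, is to rule out the possibility that some \emph{other} non-constant trajectory $\Phi_t^{\nabla H}(x)$, starting at an interior point $x$, converges to a boundary critical point $p$ as $t \to \pm\infty$ without actually hitting the boundary in finite time. Here I would appeal to Lemma \ref{lemma:sign of H}: that lemma shows that a trajectory hitting the boundary from the interior must do so transversally and in finite time, stopping there, and that it can meet the boundary only at its endpoints. A boundary critical point $p$ has $H(p) = 0$, whereas by Lemma \ref{lemma:sign of H} a trajectory approaching the boundary transversally near a point $q$ requires $H(q) \neq 0$ (the normal component of $\nabla H$ is $H|_V$, which must be nonzero for genuine transversal approach). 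Since $H$ is continuous and strictly increasing along the trajectory, a trajectory converging to $p$ would have to arrive with $H$ increasing toward $0$ from below (or decreasing toward $0$ from above in reverse time), but this is exactly the situation shown in Lemma \ref{lemma:sign of H} to be impossible for a trajectory that reaches the boundary: the normal component of the gradient would have the wrong sign, pushing the trajectory back into the interior rather than allowing convergence to $p$.

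I expect the main obstacle to be the careful treatment of asymptotic (infinite-time) convergence as opposed to finite-time arrival, since Lemma \ref{lemma:sign of H} is phrased in terms of trajectories that \emph{hit} the boundary. To close this gap cleanly I would argue that convergence to $p$ forces $H \to H(p) = 0$ along the trajectory, and then use the monotonicity $dH(\nabla H) > 0$ together with the sign analysis of equation (\ref{eq:nabla_H}): approaching $p$ from the region $H < 0$ means $\nabla H$ points \emph{into} $W$ near the boundary, contradicting any inward convergence, while the region $H > 0$ is ruled out symmetrically in reverse time. Hence no non-constant gradient trajectory can converge to a boundary critical point, and the boundary critical points themselves carry only constant trajectories; this establishes the corollary.
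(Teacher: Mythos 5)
Your proof is correct and follows essentially the same route as the paper, which states this corollary without a separate proof precisely because it is the combination you describe: boundary critical points satisfy $H=0$ and carry constant trajectories by Lemma \ref{lemma:H=0}, while the sign analysis of the normal component $H|_V\,\partial_s$ in equation \eqref{eq:nabla_H} (as in Lemma \ref{lemma:sign of H}) shows that a trajectory with $H<0$ (resp.\ $H>0$) has decreasing $s$-coordinate in forward (resp.\ backward) time and so cannot converge to the boundary. Your explicit treatment of the infinite-time case is a welcome clarification, not a deviation.
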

The behaviour of the gradient flow near the boundary is essential in understanding local and global extrema of the Hamiltonian function. 
Critical sets can be closed, located in the interior of $W,$  or with a non empty boundary that is contained in the boundary of $W$ (see \cite[Theorem B]{MN}). In particular, there does not exist  boundary critical set that is not the boundary of some interior critical set.

\begin{proposition}\label{prop:max}
The maximum and minimum of $H$ are unique and at least one of them is attained at the boundary. 
\end{proposition}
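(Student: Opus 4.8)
The plan is to separate the statement into two independent claims—uniqueness of the two extrema, and the assertion that at least one of them lies on the boundary—and to attack them in that order. Since $W$ is compact and $H$ continuous, both $\max H$ and $\min H$ are attained, so the whole problem is one of \emph{locating} them. I would reduce the boundary claim to a local-extremum computation in the collar \eqref{eq:collar}, and then feed it into the uniqueness statement to conclude.

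For uniqueness I would show that every regular level set $H^{-1}(c)$, with $c\in(\min H,\max H)$, is connected, and then invoke the standard Morse-theoretic dichotomy: two distinct local maxima would produce, for $c$ just below the smaller critical value, two disjoint components of $H^{-1}(c)$ capping off the two maxima, contradicting connectedness (and symmetrically for minima). Connectedness of the level sets is the classical consequence of $H$ being a Morse--Bott function (\cite[Theorem B]{MN}) whose critical sets all have even index \emph{and} even coindex, as recalled in the introduction: crossing such a critical set alters the level set by a surgery of codimension at least two on both sides, which cannot disconnect a connected hypersurface, and $W$ is connected. The genuinely new point is the interaction with the boundary $V$, and this is exactly where Lemma \ref{lemma:sign of H} and Lemma \ref{lemma:H=0} enter: away from $V^{0}=\{H|_V=0\}$ the field $\nabla H$ meets $V$ transversally by \eqref{eq:nabla_H}—inward where $H<0$, outward where $H>0$—so the sublevel sets $\{H\le c\}$ retract across the collar in a controlled way and pushing a regular value through a boundary value does not spawn a new component.

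For the second claim I would first note that $H|_V=\alpha(X)\not\equiv0$: otherwise the collar identity $H(s,p)=e^{s}\,\alpha_p(X)$ would force $H\equiv0$ on the open set $(-\epsilon,0]\times V$, contradicting that the critical set of the Morse--Bott function $H$ is a proper submanifold. Hence $H|_V$ is strictly positive somewhere or strictly negative somewhere; assume $\max_V H>0$, the negative case being symmetric under $H\mapsto-H$. Let $p_{+}\in V$ maximise $H|_V$. Then $\nabla H|_V(p_{+})=0$, so \eqref{eq:nabla_H} gives $\nabla H(p_{+})=H(p_{+})\,\partial_s$ with $H(p_{+})>0$, and the formula $H(s,p)=e^{s}H|_V(p)$ shows that $(0,p_{+})$ is a local maximum of $H$ on all of $W$, since both the inward normal direction and the tangential directions decrease $H$ there. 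By the uniqueness established above this local maximum must be the global maximum, so $\max H$ is attained on $V$.

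The hard part is the seemingly routine first step—making the connectedness of the level sets rigorous \emph{near} $V$. On a closed manifold this is standard, but with a contact type boundary one must rule out that sweeping a regular value through the interval of boundary values of $H$ creates a new connected component; this is the only place where the boundary genuinely intervenes. The sign dichotomy of Lemma \ref{lemma:sign of H} is what rescues the argument, since it fixes the direction of $\nabla H$ along $V\setminus V^{0}$ and lets the collar be absorbed into the gradient flow, but it is the step deserving the most care.
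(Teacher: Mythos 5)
Your strategy is genuinely different from the paper's: you aim to prove uniqueness via connectedness of the level sets $H^{-1}(c)$ (the Atiyah-style argument for Morse--Bott functions with even index and coindex) and then locate an extremum on $V$ by the explicit collar computation $H(s,p)=e^{s}H|_V(p)$, which correctly shows that a maximizer $p_+$ of $H|_V$ with $H(p_+)>0$ is a local maximum of $H$ on all of $W$ (this computation, and your observation that $H|_V\not\equiv 0$, are sound and are not spelled out in the paper). The paper instead never discusses level sets: it decomposes $W$ as $\sqcup_j W^s(C_j)\sqcup W^s(\partial W)\sqcup\partial^0 W$, notes that every $W^s(C_j)$ is even-dimensional while $W^s(\partial W)$ is open, and concludes that a connected manifold admits exactly one full-dimensional stratum in such a decomposition; uniqueness of the maximum and the boundary statement (via the symmetric decomposition into unstable manifolds, and the fact that $W^s(\partial W)$ and $W^u(\partial W)$ cannot both be empty) both fall out at once.

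The genuine gap is exactly the step you flag as ``the hard part'': connectedness of the level sets in the presence of the boundary. This is not a routine adaptation to be deferred --- it is the entire content of the proposition beyond the closed case, and your proof of the second claim relies on it (you need ``every local maximum is global,'' which only follows once level-set connectedness is actually established, including when a regular value sweeps through the interval of boundary values of $H$ and the level set acquires or loses boundary components along $V$). As written, the proposal therefore assumes the conclusion at its crux. Two remarks on how to close it. First, the cleanest repair is to abandon the level sets and adopt the paper's stable-manifold decomposition: the point is that the sets $W^s(C_j)$ for non-maximal $C_j$ have codimension at least $2$ (even dimension plus not full-dimensional), their removal does not disconnect $W$, and Lemma \ref{lemma:sign of H} together with the flow-box argument shows $W^s(\partial W)$ is open; connectedness of $W$ then forces a unique full-dimensional stratum without ever tracking level sets. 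Second, if you do pursue level sets, you should also justify that the \emph{coindex} of every critical set is even (the introduction only records evenness of the index); this does hold, because the positive and negative normal bundles of a fixed component of a Hamiltonian circle action split into two-dimensional weight spaces, but it must be said, since the surgery argument needs codimension at least two on both sides.
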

\begin{proof}
According to the properties of the flow described above we can decompose $W$ in the following way 

$$W=\sqcup_{j=1}^mW^s(C_j)\sqcup W^s(\partial W)\sqcup \partial^0W,$$
where

$$
  W^s(C_j) := \bigl\{p\in W\bigm|\; \lim_{t\to \infty} \Phi_{\;
    t}^{\nabla H}(p) \in C_j \bigr\},$$
    is the stable manifold of the critical set $C_j,$ $j=1, \ldots, m,$    
 $W^s(\partial W)$
 is the set of trajectories that end at the boundary (in regular points) and 
 $\partial^0W$
 is the set of isotropic orbits at the boundary.

 Since every critical set $C_j$ is even dimensional manifold, the corresponding index $i^-(C_j)$, i.e. the dimension of the negative eigenspace of the corresponding Hessian, is also even and
 $\dim W^s(C_j)=\dim C_j+i^-(C_j)$, it follows that all stable manifolds are even dimensional. In particular, if the maximum of $H$ is attained on the critical set $C_{max}$ then
$\dim W^s(C_{max})=\dim W$ and other stable manifolds are codimension $\geq2$.
Next, $\partial^0W$ is either empty or it is a hypersurface at the boundary corresponding to the zero level set of $H$.

Finally, let us show that $W^s(\partial W)$ is either empty or an open connected submanifold. 
If $W^s(\partial W)$ is empty, then, according to the decomposition of $W$ into even dimensional submanifolds, it follows that there is unique full dimensional stable manifold and it is the stable manifold of the maximum.
Suppose $p_0\in W^s(\partial W)$.
If $p_0\in\partial W$ then $H(p_0)>0$ and there is a neighborhood $U\subset\partial W$
of $p$ such that $H|_U>0.$ 
Then, we can choose a small  $\epsilon > 0$ such that $(t,p)\mapsto \Phi^{\nabla H}_t(p)$
  defines a diffeomorphism of $(-\epsilon, 0]\times U$ onto a
  neighborhood of $p_0$ in $W$.  Thus, there exists an open
  neighborhood of $p_0$ that lies in $W^s(\partial W)$.
  If $p_0\in W^s(\partial W)$ is an interior point of $W$, then
  let $\gamma$ be the gradient trajectory starting at $p_0$
  intersecting $\partial W$ at time $t = T$. As above, $\gamma(T)$ has a small
  open neighborhood~$U$ that lies in $W^s(\partial W)$.  If
  $T'\in (0,T)$ is chosen sufficiently close to $T$, then
  $p_1 = \gamma(T')$ will lie in $U$.  Then, there are
  open neighborhoods~$U_0$ of $p_0$ and $U_1$ of $p_1$ such that
  $\Phi^{\nabla H}_{T'}$ is a diffeomorphism between $U_0$ and $U_1$.
  After possibly shrinking the size of $U_1$, we may assume that $U_1$
  lies in $U$.  Replace $U_0$ by $U_0 := \Phi^{\nabla H}_{-T'}(U_1)$,
  then by construction $U_0$ is an open neighborhood of $p_0$ that lies in
  $W^s(\partial W)$.  
   Therefore, if  $W^s(\partial W)$ is non-empty, then it is an open
  subset. In this case, due to the decomposition of $W$, it follows that $W^s(\partial W)$ is connected and there is no critical set where the maximum is attained (i.e. stable manifolds are of codimension at least 2).
  
  If we analogously observe the unstable manifolds, we conclude that either minimum or maximum has to be attained at the boundary, since the sets $W^s(\partial W)$ and 
  $W^u(\partial W)$    cannot be both empty.
  
\end{proof}  

\begin{corollary}\label{cor:max}
If a Hamiltonian 4-manifold with a contact type boundary admits a critical surface, then it is a maximum or a minimum of the Hamiltonian function. Moreover, there can be at most one critical surface.
\end{corollary}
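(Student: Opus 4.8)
The plan is to deduce everything from the index of the critical surface together with the gradient-flow decomposition already set up in Proposition \ref{prop:max}. Let $C$ be a critical surface, so that $\dim C = 2$ inside the $4$-manifold $W$ and its normal bundle has real rank $2$. Since $H$ is Morse-Bott with all critical indices even, the normal Hessian of $H$ along $C$ is nondegenerate with even index, so $i^-(C) \in \{0, 2\}$, constant along $C$ by connectedness. (This is the only place the possible boundary of $C$ on $\partial W$ enters, and it is harmless, since the Hessian is taken in the normal directions.) Thus $C$ is a normal local minimum when $i^-(C)=0$ and a normal local maximum when $i^-(C)=2$, and as $H$ is constant on $C$ this makes $C$ a local minimum or a local maximum of $H$.

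To upgrade this to a global statement I would use the dimension count $\dim W^s(C) = \dim C + i^-(C)$. If $i^-(C)=2$ then $\dim W^s(C)=4$, so $W^s(C)$ is a full-dimensional stratum of $W = \sqcup_j W^s(C_j) \sqcup W^s(\partial W) \sqcup \partial^0 W$. By the proof of Proposition \ref{prop:max} there is a unique full-dimensional stratum; since here it is the stable manifold of a critical set and not $W^s(\partial W)$, it is the stable manifold of the maximum. Hence $C$ is the maximum of $H$ and $W^s(\partial W)=\emptyset$. Symmetrically, if $i^-(C)=0$ then $\dim W^u(C)=4$, so $C$ carries the unique full-dimensional unstable stratum, $C$ is the minimum, and $W^u(\partial W)=\emptyset$. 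This proves the first assertion.

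For the second assertion I would argue by contradiction. Suppose there are two critical surfaces $C_1$ and $C_2$. By the first part each is the maximum or the minimum, and since the maximum and minimum of $H$ are unique (Proposition \ref{prop:max}) they cannot both be maxima nor both minima; so, say, $i^-(C_1)=2$ with $C_1$ the maximum and $i^-(C_2)=0$ with $C_2$ the minimum. But then the computation above forces both $W^s(\partial W)=\emptyset$ and $W^u(\partial W)=\emptyset$, contradicting the last step in the proof of Proposition \ref{prop:max}, where it is shown that $W^s(\partial W)$ and $W^u(\partial W)$ cannot both be empty. Therefore at most one critical surface can exist.

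The step I expect to be the main obstacle is exactly the passage from "local extremum" to "global extremum": its honest content is that the gradient stratification has a single top-dimensional piece, which Proposition \ref{prop:max} identifies as the stable (resp. unstable) manifold of the unique maximum (resp. minimum). Once that identification is granted, the index dichotomy and the final contradiction are bookkeeping, the only subtlety being the reminder that a critical surface may meet $\partial W$ without affecting the normal Hessian computation.
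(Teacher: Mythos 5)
Your proof is correct and follows essentially the same route as the paper: show that a critical surface must be a local extremum, then invoke the stratification and uniqueness statements of Proposition \ref{prop:max} to upgrade this to a global extremum and rule out a second surface. The only difference is cosmetic --- the paper obtains the local-extremum step by citing Karshon's local normal form \cite[Corollary A.7]{Karshon}, whereas you derive it from the evenness of the Morse--Bott index of the rank-two normal bundle, and you make explicit the local-to-global step and the $W^s(\partial W)=W^u(\partial W)=\emptyset$ contradiction that the paper leaves implicit.
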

\begin{proof}
According to  the local model formula in \cite[Corollary A.7 ]{Karshon}, it follows that
if there is a critical surface in a Hamiltonian 4-manifold then it must be a local maximum or minimum. 
Therefore, according to Proposition \ref{prop:max} there is at most one critical surface.

\end{proof}

\begin{remark} \label{remark:boundary} According to the decomposition of a Hamiltonian manifold with a convex contact boundary,
it follows that  
there can be  at most two boundary components of $\partial W$, one where gradient trajectories start and the other where gradient trajectories end. 
In particular, if there exists critical set where the minimum or the maximum of the Hamiltonian function is attained, then the boundary is connected.
We address that, according to
\cite[Theorem C]{MN}, in dimension 4 the boundary is always connected. This relevant property will be used in the proof of Theorem \ref{theorem1b}.
\end{remark}

\subsection{Legendrian orbits on the boundary of Hamiltonian 4-manifolds}
Let $(W,\omega)$ be a symplectic 4-manifold with a contact type boundary.
The orbits at the boundary that are tangent to the contact structure are called Legendrian orbits. 
The importance of Legendrian orbits was first established by Lutz in \cite{Lutz} who classified many non contactomorphic contact structures on some 3-manifolds by constructing free circle actions with  Legendrian orbits.

\begin{lemma}\label{lemma:legendrian_tori1}
A Hamiltonian manifold $(W^4,\omega)$ with a contact type boundary admits
Legendrian orbits if and only if both extrema are attained at the boundary $V$.

\end{lemma}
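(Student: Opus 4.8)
The plan is to characterize Legendrian orbits via the vanishing of the Hamiltonian function on the boundary, using the collar model \eqref{eq:collar}--\eqref{eq:action} together with Lemma \ref{lemma:H=0} and Proposition \ref{prop:max}. The key identity is that on $V$ we have $H = \alpha(X)$, so an orbit through a regular boundary point $p$ is Legendrian (i.e.\ tangent to $\ker\alpha$) exactly when $\alpha(X_p)=0$, that is, exactly when $H(p)=0$. Thus the existence of a Legendrian orbit is equivalent to the existence of a \emph{regular} boundary point at which $H$ vanishes. This reformulation turns the lemma into a statement about the zero level set of $H$ on the boundary and about where the extrema of $H$ are attained.

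For the forward direction, I would suppose a Legendrian orbit exists, so that $H$ takes the value $0$ at a regular boundary point. Since $H$ is constant on $S^1$-orbits, and since by Lemma \ref{lemma:sign of H} a boundary point with $H(p)>0$ has $\nabla H$ pointing transversally out while $H(p)<0$ gives $\nabla H$ pointing transversally in, the presence of a zero level at the boundary forces $H$ to take both positive and negative values on $V$ near this orbit (the orbit separates the region where trajectories leave the interior from the region where they enter). The aim is then to conclude that the maximum of $H$ exceeds $0$ and the minimum of $H$ is below $0$, and that both these extreme values are realized on $V$. Here I would invoke the decomposition from Proposition \ref{prop:max}: the nonempty set $\partial^0 W$ of isotropic boundary orbits is precisely the zero level $\{H=0\}$ on $V$, and its presence forces both $W^s(\partial W)$ and $W^u(\partial W)$ to be nonempty, whence by the codimension argument neither the maximum nor the minimum can be attained on an interior critical set; both extrema must lie on $V$.

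For the converse, suppose both the maximum and the minimum are attained at the boundary. Then $\max_V H>0$ and $\min_V H<0$: indeed, a boundary extremum cannot occur at a point with $H\neq 0$ of the ``wrong'' sign, since by Lemma \ref{lemma:sign of H} the gradient would point into the interior (at a minimum candidate with $H>0$) or out of it, contradicting extremality along the collar direction $\partial_s$ in \eqref{eq:nabla_H}. More carefully, the collar formula $H(s,p)=e^s\alpha_p(X_V)$ shows that a boundary maximum must have $H>0$ there and a boundary minimum must have $H<0$ there. Since $V$ is connected (Remark \ref{remark:boundary}, via \cite[Theorem C]{MN}) and $H|_V$ is continuous taking both signs, the intermediate value theorem yields a boundary point with $H=0$; by Lemma \ref{lemma:H=0} such a point is either a fixed point or a regular isotropic point, and I would argue that a genuine zero level containing only isolated fixed points cannot separate the sign-positive and sign-negative regions, so a regular zero must occur, giving a Legendrian orbit.

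The main obstacle I anticipate is the converse direction, specifically ensuring that the zero set $\{H=0\}\cap V$ contains a \emph{regular} point rather than consisting solely of fixed points of the action. The sign analysis and connectedness give a zero of $H$ on $V$, but ruling out the degenerate possibility that all such zeros are fixed points requires a dimension or separation argument: the set $\{H>0\}\cap V$ and $\{H<0\}\cap V$ are both nonempty open subsets of the connected hypersurface $V$, and their common topological boundary within $V$ must be a codimension-one subset, which cannot be exhausted by the isolated (or lower-dimensional) fixed-point contribution. Making this separation argument precise, and correctly handling the case of possibly non-isolated fixed points on $V$, is where the real care is needed.
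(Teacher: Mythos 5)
Your overall skeleton matches the paper's: characterize Legendrian orbits as regular boundary points where $H=\alpha(X)$ vanishes, use Lemma \ref{lemma:sign of H} to relate the sign of $H|_V$ to starting/ending points of gradient trajectories, and use connectedness of $V$ together with the decomposition behind Proposition \ref{prop:max}. However, there is a genuine gap in the direction ``Legendrian orbits imply both extrema at the boundary.'' You assert that ``the presence of a zero level at the boundary forces $H$ to take both positive and negative values on $V$ near this orbit,'' with only the parenthetical remark that the orbit separates the two regions --- but that is precisely what must be proved. A priori $H|_V$ could be $\geq 0$ everywhere, with the Legendrian torus a degenerate minimum of $H|_V$; then $W^u(\partial W)$ would be empty and the minimum could sit on an interior critical set. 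The paper closes this by showing $dH$ cannot vanish on the Legendrian locus $\Sigma$: from $0=L_X\alpha=\iota_X d\alpha+dH$, if $dH_p=0$ then $\iota_X d\alpha|_p=0$, which together with $\alpha_p(X)=0$ and the contact condition forces $X(p)=0$, contradicting that $p$ lies on a (free) Legendrian orbit. Equivalently, $\iota_X\omega=-dH$ and nondegeneracy of $\omega$ give $dH_p\neq 0$ at regular points; combined with $dH_p(Y)=H(p)=0$ on $\Sigma$ this shows $d(H|_V)_p\neq 0$, so $H|_V$ genuinely changes sign across $\Sigma$. Without this computation your argument does not get off the ground.

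In the other direction your plan is essentially sound, but you leave unresolved the step you yourself flag as ``the main obstacle'': ruling out that every zero of $H$ on $V$ is a fixed point. The paper's resolution is different from, and cleaner than, your proposed separation argument: if both extrema are attained at the boundary then, by the uniqueness statement in Proposition \ref{prop:max} (cf.\ Corollary \ref{cor:max}), no critical set realizes an extremum, so there is no critical surface at all; since boundary fixed points occur only as boundaries of interior critical surfaces, there are no boundary fixed points whatsoever, and every point of $\{H|_V=0\}$ is automatically regular. Your codimension argument could be made to work (boundary fixed points form circles, which have codimension two in the $3$-manifold $V$ and hence cannot disconnect it), but it requires the structure of boundary critical sets from \cite[Theorem B]{MN} as input, and you did not actually carry it out.
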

\begin{proof}
If both extrema are attained at the boundary, then the set of trajectories that start at the boundary and the set of trajectories that end at the boundary are both non-empty. According to the relation of the direction of $\nabla H$ and the sign of $H$ (see Lemma \ref{lemma:sign of H}), at the starting points we have $H<0$ and at the ending points we have $H>0.$ As the boundary is connected (\cite[Theorem C]{MN}), there must exist boundary points $\Sigma$ where $H$ vanishes. Moreover, since both extrema are attained at the boundary, it follows that there are no critical surfaces and, in particular, there are no boundary critical points. Therefore, all points of $\Sigma $ are regular points of $H.$ Thus, $X$ does not vanish along $\Sigma $. Finally, since 
for every point $p\in V$ whose orbit is Legendrian it follows that $H(p)=\alpha(X_p)=0$.
Therefore,  all orbits of $\Sigma$ are isotropic, and thus, Legendrian.

On the other hand, if there is a set $\Sigma$ of Legendrian orbits, then $H|_V$ changes the sign in the neighborhood of $\Sigma.$ Namely, if $H>0$ on the boundary $V$, then, since $H=0$ on $\Sigma$, it follows that $\Sigma$ is a local minimum of $H$ and thus $dH$ would vanish along $\Sigma$. Let us show that
$dH$ does not vanish on $\Sigma$. This can be seen  using Cartan's formula
$0=L_X\alpha=\iota_Xd\alpha+dH.$
If $dH$ vanishes at some point $p\in\Sigma$, then $\iota_Xd\alpha(p)=0.$ However, since $\alpha_p(X)=0$ it follows that  $X(p)=0,$ i.e. $p$ is a fixed point, what is not possible.
Similarly, it cannot happen that $H<0$ everywhere on $V$ away from $\Sigma.$ Thus, $H$ changes the sign along the boundary. It follows that both sets of the trajectories that start at the boundary and of the trajectories that end at the boundary are non-empty. Threfore, both maximum and minimum are attained at the boundary.
\end{proof}

Note that, as explained above, the existence of Legendrian orbits exclude the existence of boundary fixed points. Alternatively, this can be concluded from     \cite[Lemma IV 19.]{Nie}, 
that states that existence of both Legendrian orbits and fixed points implies that the contact structure is overtwisted. In our case, the contact structure on the boundary of $(  W,  \omega)$ is strongly fillable, due to the existence of a Liouville vector field near the boundary pointing out of the boundary.

\begin{lemma}\label{lemma:legendrian_tori}
For any Hamiltonian manifold $(W^4,\omega)$ with a contact type boundary 
the set of Legendrian orbits is either empty or a collection of tori.

\end{lemma}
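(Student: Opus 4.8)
The plan is to show that the set of Legendrian orbits $\Sigma$ is a smooth, closed, $S^1$-invariant surface on which the action is free, and then use the quotient by the circle action to conclude that each connected component is a torus. The starting point is Lemma \ref{lemma:legendrian_tori1}, which tells me that Legendrian orbits exist precisely when both extrema of $H$ are attained at the boundary, and in that case the set $\Sigma = \{p \in V : H(p) = 0\}$ consists entirely of regular points (no fixed points) whose orbits are Legendrian. First I would establish that $\Sigma$ is a smooth compact surface: since $dH$ does not vanish along $\Sigma$ (as shown in the proof of Lemma \ref{lemma:legendrian_tori1}, via Cartan's formula $0 = \iota_X d\alpha + dH$ together with $\alpha_p(X) = 0$ forcing $X(p) = 0$ if $dH_p = 0$), the value $0$ is a regular value of the restriction $H|_V \colon V \to \mathbb{R}$. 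By the regular value theorem, $\Sigma = (H|_V)^{-1}(0)$ is then a smooth closed codimension-one submanifold of the compact $3$-manifold $V$, hence a disjoint union of closed surfaces.

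Next I would use the circle action. Because $\Sigma$ is the zero level set of the invariant function $H|_V$, it is $S^1$-invariant, and because every orbit in $\Sigma$ is Legendrian with $X(p) \neq 0$ everywhere on $\Sigma$, the circle action restricted to each connected component of $\Sigma$ is locally free, and in fact free: the stabiliser of any point is constant along its orbit, and a nontrivial finite stabiliser $\mathbb{Z}_k$ would make the orbit a shorter circle, but the key point is that there are no fixed points, so the action has no zero-dimensional orbits. Thus each component $\Sigma_0$ of $\Sigma$ is a closed surface carrying a free (or at least locally free) $S^1$-action, and the orbits $S^1 \cdot p$ are the $1$-dimensional Legendrian circles foliating $\Sigma_0$.

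The conclusion then follows from the elementary classification of closed surfaces admitting a locally free circle action: such a surface fibers over a $1$-manifold (its orbit space $\Sigma_0 / S^1$) with circle fibers. Since $\Sigma_0$ is closed and the orbit space is a closed $1$-manifold, i.e.\ a circle, $\Sigma_0$ is a circle bundle over $S^1$, hence either a torus or a Klein bottle. Here I would invoke orientability: by Theorem \ref{theorem1b}, the relevant boundary surface is orientable, and the Legendrian orbits separate regions where $H > 0$ from regions where $H < 0$, so $\Sigma$ is two-sided in $V$; alternatively, one can note that the only closed surface admitting a \emph{free} $S^1$-action and bounding on each side as required is the torus. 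This rules out the Klein bottle and gives that each component of $\Sigma$ is a torus.

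The step I expect to be the main obstacle is ruling out the Klein bottle, i.e.\ ensuring the action on $\Sigma$ is genuinely free (not merely locally free) and that $\Sigma$ is two-sided, since the Klein bottle does admit a locally free circle action. The cleanest resolution is to lean on the two-sidedness of $\Sigma$ inside $V$: as the zero set of a function $H|_V$ with non-vanishing differential that changes sign across $\Sigma$, the surface $\Sigma$ is co-orientable in $V$, and combined with the freeness of the action (guaranteed by the absence of fixed points together with the fact that we are in the setting where the whole manifold carries a free action, per Theorem \ref{theorem1b}) this forces the torus. I would make sure to phrase this carefully so that the argument applies both in the free-action setting of Theorem \ref{theorem1b} and in the general Hamiltonian $4$-manifold setting where the action near $\Sigma$ is free because $\Sigma$ contains no fixed points.
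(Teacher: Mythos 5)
Your proposal is correct, and its first half (zero is a regular value of $H|_V$ because $dH$ does not vanish on $\Sigma$, so $\Sigma$ is a closed surface in $V$) is exactly the paper's first step. The two arguments diverge in how they get from ``closed surface foliated by circle orbits'' to ``torus''. The paper quotients a neighborhood $\mathcal N(\Sigma)\subset V$ by the action, observes that $H$ descends to $H_{red}$ with regular zero set a union of circles, and declares $\Sigma$ a principal circle bundle over those circles, hence a union of tori; this tacitly uses that the action on $\Sigma$ is genuinely free (otherwise the quotient is an orbifold and the bundle is not principal), a fact the paper asserts elsewhere but does not prove here. You instead classify closed surfaces with a nowhere-vanishing generating vector field ($\chi=0$, so torus or Klein bottle) and kill the Klein bottle by two-sidedness: $\Sigma$ is a regular level set, hence co-orientable in the orientable $3$-manifold $V=\partial W$, hence orientable. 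That is the better half of your write-up, because it only needs the action on $\Sigma$ to be locally free, which follows immediately from the absence of fixed points; it buys you the conclusion without settling whether Legendrian orbits can have $\mathbb Z_k$ stabilisers. One caution: your intermediate claims that the action on $\Sigma$ is ``in fact free'' because there are no fixed points, and your appeal to Theorem \ref{theorem1b}, do not hold water --- no fixed points only gives locally free, and Theorem \ref{theorem1b} concerns the special case of a globally free action whereas this lemma is general --- so in the final write-up you should drop the freeness discussion entirely and rest the exclusion of the Klein bottle solely on the orientability/co-orientability argument, which is self-contained and correct.
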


\begin{proof}
If $H:W\to\mathbb R$ denotes a Hamiltonian function, then the set of Legendrian orbits $\Sigma$, if non-empty,  is precisely the set  $H^{-1}(0)\cap\partial W.$
In order to see that the set $\Sigma$ is a closed submanifold, it is enough to show that zero is a regular value of $H.$ This is true, since, as explained above, $dH$ does not vanish on $\Sigma$.

It is left to show that $\Sigma$ is a collection of tori. The orbit space of a small neighborhood $\mathcal N(\Sigma)$ of $\Sigma$ along the boundary $V$ is a smooth surface $\mathcal N(\Sigma)/S^1$. Since a Hamiltonian function $H$ is constant on orbits, it induces a smooth function $H_{red}$ on this surface. The zero set of $H_{red}$ is 1-dimensional submanifold, thus, a family of circles. The set of Legendrian orbits $\Sigma$ is a   principal circle bundle over this family of circles. Therefore, $\Sigma$ is a family of tori.
\end{proof}

We remark that the statement of previous lemma holds for any contact 3-manifold with a Hamiltonian circle action, as explained in     \cite[Lemma IV 6.]{Nie}.

\section{Basic examples}\label{section:examples}

Here are several  basic examples of Hamiltonian 4-manifolds with a contact type boundary.
In the next sections we will see that some of them can be constructed by using different methods.

\begin{example}\label{example:disc}
a) Consider $(D^4, \omega_{st}=\frac{i}{2}\sum_{k=1}^2dz_k\wedge d\overline{z}_k)$ with a Liouville vector field  $Y=\frac{1}{2}\sum_{k=1}^2(z_k\frac{\partial}{\partial z_k}+\overline{z}_k\frac{\partial}{\overline{z}_k}))$ and with the Hamiltonian circle action defined by the (isotropy) weights $(m,n):$
 $$e^{i\theta}*(z_1,z_2)\mapsto(e^{im\theta}z_1,e^{in\theta}z_2),$$
where $m,n\in\mathbb{Z}$ are relatively prime (in order to have an effective action). The generator if this action is the vector field 
$X=im(z_1\frac{\partial}{\partial z_1}-\overline{z}_1\frac{\partial}{\overline{z}_1}))+in(z_2\frac{\partial}{\partial z_2}-\overline{z}_2\frac{\partial}{\overline{z}_2}))$. Therefore, the corresponding moment map is given by 
$$H(z_1,z_2)=\frac{m}{2}|z_1|^2+\frac{n}{2}|z_2|^2.$$

 If $mn\neq0$ then there exists one isolated critical point $(0,0).$ 
 If $m,n>0$ (or $m,n<0$) then $H$ attains its global minimum (maximum) on $(0,0)$ whose index is equal to $0$ (4), while if $mn<0$ then the index of $(0,0)$ is equal to 2. The standard metric $g$, defined by $g(\partial z_k,\partial \bar z_k)=1/2,$ 
 $k=1,2,$ and all other inner products are zero, is invariant under the given action.
 The set of points $z_2=0$ form a  $\mathbb Z_m$-gradient disc and  the set of points $z_1=0$ form  a $\mathbb Z_n$ gradient disc.

 If $mn=0$ then the set of critical points is a 2-disc and the non-zero weight has to be equal to $1$ or $-1$ in order to have an effective action.
Therefore, every point in this disc  is connected to the boundary with a free gradient disc. If $n=0$ and $m=1$ ($m=-1$),  the minimum (maximum) of $H$ is attained at the critical surface $z_1=0$ and the maximum (minimum) of $H$ is attained at the boundary orbit $|z_1|=1.$

b) Suppose $mn\neq0$ and  perform a symplectic blow up of $D^4$ of a size $\epsilon,$ detailed explained in Section \ref{section:blow}. Then, there is an induces Hamiltonian circle action on $D^4\sharp\mathbb{CP}^2$ with the standard symplectic form.
If $mn=1$ then every point of the exceptional sphere is fixed  under the circle action. Thus, the set of fixed points in $D^4\sharp\mathbb{CP}^2$ is a 2-sphere. 
On the other hand, if $mn\neq1$ then there are two fixed points in $D^4\sharp\mathbb{CP}^2$ and they are connected with the exceptional sphere that is also a $\mathbb Z_{|m-n|}$ gradient sphere. 
In particular, the symplectic form on $D^4\sharp\mathbb{CP}^2$ is not exact, due to the existence
of a closed symplectic surface.

\end{example}

\begin{example}\label{example:second}

 Consider $(S^1\times D^3, dh\wedge dt+\frac{i}{2}dz\wedge d\overline{z})$ a symplectic manifold wth a contact type boundary $S^1\times S^2$ where the Liouville vector field is given by $Y=h\frac{\partial}{\partial h}+\frac{1}{2}(z\frac{\partial}{\partial z}+\overline{z}\frac{\partial}{\overline{z}})).$
Hence the contact boundary is $(S^1\times S^2=S^1\times\{|z|^2+h^2=1\}, \ker(hdt+\frac{i}{4}(zd\overline{z}-\overline{z}dz)))$. A circle action

$$e^{i\theta}\ast(e^{it},z,h)\mapsto(e^{i(t+k\theta)},e^{im\theta}z,h),$$ 
where the integers $k$ and $m$ are relatively prime, in order to get an effective action,
is Hamiltonian. 
The corresponding Hamiltonian function is given by
$$H(e^{it},z,h)=kh+\frac{m}{2}|z|^2.$$
The metric $g$ defined by
$g(\partial t, \partial t)=g(\partial h, \partial h)=1,$ $g(\partial z,\partial \bar z)=1/2$ and all other inner product are zero, is invariant under this action. 
The annulus $S^1\times \{h\in[-1,1]\}$ is a $\mathbb{Z}_k$-annulus with respect to the given metric.

In particular, if $k=1,m=0$, the action is free, while if $k=0, m=1$ there exists one critical surface $S^1 \times [-1,1].$ These particular examples will be constructed in Section \ref{section:free} and Section \ref{section:surface}.
\end{example}

\begin{example}\label{example:cotangent} \emph{Cotangent bundle examples.}
For any closed Riemannian manifold $(L,g)$ the corresponding cotangent bundle $T^*L$ can be equiped with a symplectic form $d \lambda_{can},$
where $\lambda_{can}$ is a canonical Liouville 1-form. Moreover, the cosphere bundle over $L$ (where each fiber is a unit sphere with respect to $h$) is a hypersurface of contact type in $T*L$ and, therefore, the disc contangent bundle can be equipped with a symplectic structure whose boundary is convex contact.

Every diffeomorphism on a closed Riemannian manifold $(L,g)$ lifts to a symplectomorphism on the cotangent bundle $(T^*L, d\lambda_{can})$.
Therefore, a smooth circle action  on $L$ induces symplectic circle action on $T^*L.$ 
Moreover, this action is a
Hamiltonian action on $T^*L$ and we now present the corresponding Hamiltonian function in local coordinates. 
Denote by $q_j$ the coordinates on $L$ and by  $p_j$ the coordinates on each fiber $T^*_qL$. Then, the canonical Liouville form is given by
$\lambda_{can}=\sum_j p_jdq_j.$ Denote by  $q_j(t)$, $t\in S^1$, the orbits on the circle action on $L$. 
The Hamiltonian function on $T^*L$ is given by
  \begin{equation*}
   H(q_j,p_j)=\sum_j p_jq'_j(t).  \end{equation*}
 In particular, $H$ vanishes along fixed point sets.
 
Moreover, if  $g$ is an invariant metric on $L$ then, there is a well defined Hamiltonian action on the disc cotangent bundle, denoted by $D^*L$.

According to Audin \cite[I.3.a]{Audin} the only surfaces admitting 
 circle actions by diffeomorphisms are a torus, a sphere, a real projective space and a Klein bottle. 
We now explore these circle actions and the corresponding Hamiltonian actions on the disc cotangent bundles.

\emph{The torus.}  The circle acts on $T^2$  by rotations 
$$e^{i\theta}*(e^{it_1},e^{it_2})\rightarrow (e^{i(m\theta+t_1)}, e^{i(n\theta+t_2)}),$$
 where $m$ and $n$ are relatively prime, in order to have an effective action. This action is free and it lifts to a free Hamiltonian action on 
the disc cotangent bundle $(D^*T^2=T^2\times D^2_{(x,y)},\lambda_{can}=xdt_1+ydt_2).$ The corresponding Hamiltonian function is given by
$H(e^{it_1},e^{it_2},x,y)=mx+ny.$ For more on free Hamiltonian circle actions see Section \ref{section:free}.

\emph{The sphere.}
The circle acts on $S^2$ by rotations around fixed axis.
There are two fixed points, the intersection of $S^2$ and the axises, and all other orbits are free. This action lifts to a Hamiltonian action on 
$D^*S^2$, with two isolated fixed points. 
As explained above, a moment map vanishes along fixed points. In particular, there are no gradient trajectories connecting them, since $H$ increases along these trajectories. The index of both critical points is equal to 2.

\emph{The real projective space.}
The real projective space $\mathbb{RP}^2$ is a quotient space of the 2-sphere under the antipodal action of $\mathbb Z_2$.
Hence, the circle action on $\mathbb{RP}^2$ is induced by the circle action on $S^2.$ 
Two fixed points in $S^2$ are antipodal, so they represent  one fixed point in $\mathbb{RP}^2$ and every point on the equator of $\SS^2$ identified with its antipodal point has $\mathbb{Z}_2$ stabilizer. Hence, there is a circle orbit in $\mathbb{RP}^2$ with $\mathbb Z_2$ stabilizer. As above this action lifts to the Hamiltonian circle action on the  cotangent bundle $T^*\mathbb{RP}^2.$
There is an isolated fixed point in the zero section and the orbit with $\mathbb{Z}_2$ stabilizer in $\mathbb{RP}^2$ lifts to the orbit in the zero section with $\mathbb Z_2$ stabilizer. Since the gradient flow preserves the stabilizer, we conclude that there is one $\mathbb{Z}_2$ annulus.
Existence of a gradient annulus implies that the index of the isolated fixed point is equal to 2.

\emph{The Klein bottle.}
The Klein bottle $K$ is defined as the  quotient space of $\mathbb R^2$ under the transformations $(x,y)\mapsto(x+1,y)$ and $(x,y)\mapsto (-x,y+1).$
There is a well defined $S^1$-action on the Klein bottle given by $\theta*(t_1,t_2)\mapsto(t_1,\theta+t_2)$. The orbits $t_1=0$ and $t_1=\frac{1}{2}$ have $\mathbb{Z}_2$ stabilizer. All other points are free. This action lifts to the Hamiltonian circle action in the cotangent bundle. 
The orbits $t_1=0$ and $t_1=\frac{1}{2}$ lift to the orbits with $\mathbb{Z}_2$ stabilizer in the zero section of the cotangent bundle.  Since the gradient flow preserves the stabilizer  we obtain two $\mathbb{Z}_2$ annuli.

\end{example}

\section{Existence of a free Hamiltonian circle action}\label{section:free}

\emph{Proof of  Theorem \ref{theorem1a}}.
 Let $B$ be any orientable surface. Take a decomposition of $B$ into two homeomorphic orientable surfaces 
$B_+$ and $B_-$  with the same boundary $\Gamma.$ Then $\Gamma$ is a collection of circles.
Consider the manifold $\tilde{W}=S^1\times [-1,1]\times B_+$ with the $S^1$-action given by the rotation on $S^1$-coordinate. 
\vskip1mm
The annulus $(S^1_t\times[-1,1],dt\wedge ds)$ is a Liouville domain, where the  Liouville vector field is given by $s\partial s.$ Note that  this Liouville vector field is equal to $\nabla f_1$, where the function $f_1:T^*S^1\rightarrow \mathbb{R}$ is given by $f_1(t,s)=s^2/2$ and the metric is defined by $dt\wedge ds(\cdot, J\cdot)$, where $J$ is an almost complex structure
 $J\partial t=\partial s$. Note that 
$S^1\times[-1,1]=f_1^{-1}[0,1/2]$ and 
$dt\wedge ds=-d(df_1 \circ J),$ i.e. $S^1\times[-1,1]$ is a Stein domain.

\vskip1mm
A surface $B_+$ with the boundary $\Gamma$ also admits a structure of a Liouville domain that we now describe.
 Use an embedding of $B_+$ in $\mathbb{R}^3$ to define a height function 
$h:B_+\rightarrow\mathbb{R}$. We then slightly perturb $h$ in order to get a Morse function, that we still denote by $h$. 
Since the surface $B_+$ is classified by the genus $g$ and the number of boundary components $k$, a function $h$ can be chosen in such a way that it admits one critical point of index zero and $2g+k-1$ critical points of index 1.  Moreover, it can be chosen in such a way that it is constant on the boundary of $B_+.$
Define a function
$f_2:B_+\rightarrow \mathbb{R}$ by $f_2=e^{ch},$
where $c>0$  will be defined later.  Since
$df_2=ce^{ch}dh$ then
$$d(-df_2\circ i)=-c^2e^{ch}dh\wedge dh\circ i-ce^{ch}d(dh\circ i)),$$
where $i$ stands for the standard almost complex structure on $\mathbb C \cong \mathbb R^2.$
The first summand is positive far from the critical points of $h$. This can be seen from the relation
$dh\wedge dh\circ i(v,iv)=dh(v)dh(i^2v)<0.$
The second summand is positive close to the critical points, as the Hessian of $h$ is non-degenerate in the neighborhood of critical points, i.e. $-d(dh\circ i)>0.$ Far from critical points the second summand is bounded. Therefore, for $c>0$ large enough it holds that $d(-df_2\circ i)$ is a symplectic form on $B_+$ and $\nabla f_2$ is the corresponding Liouville vector field pointing out of the boundary of $B_+$. Note that  the boundary of $B_+$ is a level set of $f_2.$
Finally,  rescale the function $f_2$  (and denote it again by the same letters) so that $f_2:B_+\rightarrow[0,1/2]$ and $\partial B_+=f_2^{-1}\{1/2\}$.

\vskip1mm
 Next,  extend functions $f_1$ and $f_2$ trivially  to the functions $\tilde f_1, \tilde f_2: \tilde{W}\to \mathbb R$. Then, $\omega=\pi_1^*dt\wedge ds+\pi_2^*d(-df_2\circ i)$  is a symplectic form on $\tilde{W}$, where $\pi_1:\tilde{W}\to S^1\times [-1,1]$ and 
  $\pi_2:\tilde{W}\to B_+$ are projections, and
 $\nabla(\tilde f_1+\tilde f_2)$ is a Liouville vector field with respect to $\omega.$ Then, 
$$(W=(\tilde f_1+\tilde f_2)^{-1}([0,1/2]), \omega|_W)$$
 is a symplectic manifold with the boundary $V=(\tilde f_1+\tilde f_2)^{-1}(1/2).$ 
 The symplectic form $\omega|_W$ is exact since the Liouville vector field $\nabla(\tilde f+\tilde g)$ is globally defined.

 Let us show that the boundary $V$ is of contact type, i.e. that $\nabla(\tilde f_1+\tilde f_2)$ points transversally out of $W$ along $V$. Note that $TV=\textrm{ker} d(\tilde f_1+\tilde f_2)|_V,$ so we have to show that 
 $d(\tilde f_1+\tilde f_2)(\nabla(\tilde f_1+\tilde f_2))>0$ along  $V.$ Using linearity of a differential and a fact that
 $d\tilde f_1(\nabla\tilde f_2)=d\tilde f_2(\nabla \tilde f_1)=0$ it is left to show that $d\tilde f_1(\nabla \tilde f_1)+d\tilde f_2(\nabla\tilde f_2)>0.$ From above construction we have 
 $d\tilde f_1(\nabla \tilde f_1),d\tilde f_2(\nabla\tilde f_2)\geq0.$ 
  If $d\tilde f_1(\nabla\tilde f_1)=0$ at some point $p\in V$ that means that $s$-coordinate of $p$ vanishes and therefore $\tilde f_1(p)=0.$ Since $(\tilde f_1+\tilde f_2)(p)=1/2$, then $\tilde f_2(p)=1/2$. Thus, $d\tilde f_2(\nabla\tilde f_2)(p)>0$, as $\nabla f_2$ is transversal to the boundary $f_2^{-1}(1/2)$ pointing out of the boundary.
 We conclude that $V$ is a contact type boundary. 
  
  Moreover, the $S^1$-action on $\tilde{W}$ given as the rotation on $S^1$-coordinate is a Hamiltonian action with a Hamiltonian function $H(t,s,x)=s,$ for any $(t,s,x)\in \tilde W$.
Since both functions $\tilde f_1$ and $\tilde f_2$ are invariant under this  action, there exists a well defined  restriction of this action to a Hamiltonian action on $W$. 

It is left to show that $V$ is diffeomorphic to $S^1\times B.$ Note that to every point in $V$ we assign coordinates $(t,s,x)\in V\cap (S^1\times [-1,1]\times B_+)$. Thus, a desired diffeomorphism can be given by $(t,s,x)\mapsto\begin{cases}
(t,x), & s\geq 0\\
(t,\varphi(x)), & s<0
\end{cases},$ where $\varphi:B_+\to B_-$ is a given diffeomorphism that is identity on the boundary.

Finally, the set of Legendrian orbits in $V$ is the set where $H$ vanishes, that is, the set of points where $s=0.$ For these points we have $\tilde f_1=0$ and therefore $\tilde f_2=1/2.$ However, 
$\Gamma=\partial B_+=\tilde f_2^{-1}\{1/2\}$. Thus, the set of Legendrian orbits is precisely $S^1\times\Gamma.$

The proof of Theorem  \ref{theorem1a} is completed.

\begin{remark}\label{decompose}
 Let us now see in how many ways we can decompose an orientable surface $B$ of genus $g$ into two diffeomorphic parts
 $B_+$ and $B_-$ with the common boundary $\Gamma$. The explanation is due to Pierre Dehornoy. 
 The surfaces $B_+$ and $B_-$ are uniquely given by the genus $h,$ where $2h\leq g$ and by the number $k$ of boundary components. The Euler characteristic of $B$ is equal to $2-2g,$ while the Euler characteristic of $B_{\pm}$ is equal to $2-2h-k.$ The later can be computed using the appropriate simplicial decomposition of a surface with genus $h$ and then removing $k$ two cells. Now, gluing back $B_+$ and $B_-$ along $\Gamma$, we obtain a closed surface with Euler characteristic $2(2-2h-k).$ Indeed, we can chose a simplicial decomposition of the glued surface obtained as the union of cells of the simplicial decompositions of $B_+$ and $B_-$ just described. Since the surface obtained is $B$
we conclude $2-2g=4-4h-2k.$ That is
$$2h+k=1+g.$$
Hence, the decomposition of a surface $B$ with a genus $g$ can be given by any choice of numbers $h\geq0$ and $k\geq1$ that satisfy the above equality.
\end{remark}

\begin{example}
If $B$ is a sphere or a torus, then, according to Remark \ref{decompose}, there is unique  decomposition of $B$ into two diffeomorphic parts.

a) If $B=S^2$ then, following the above construction, one obtains $V=S^1\times S^2$ and $W=S^1\times D^3$ with a symplectic form
$dh\wedge dt+\frac{i}{2}dz\wedge d\overline{z}$, a Liouville vector field  $Y=h\frac{\partial}{\partial h}+\frac{1}{2}(z\frac{\partial}{\partial z}+\overline{z}\frac{\partial}{\overline{z}})$ and a Hamiltonian circle action
 $$e^{i\theta}\ast(e^{it},z,h)\mapsto(e^{i(t+\theta)},z,h).$$
Since this action is generated by the vector field
$X=\frac{\partial}{\partial t}$, the corresponding Hamiltonian function is given by
$$H(e^{it},z,h)=h.$$
The set of  Legendrian orbits is the torus $S^1\times\{h=0\}\subset S^1\times S^2$. Note that this example is a special case of Example \ref{example:second}.

b) If $B=T^2$ one obtains  $V=S^1\times T^2$ and $W=S^1\times S^1\times D^2$ with a symplectic form 
 $dx\wedge dt_1+dy\wedge dt_2$,  a Liouville vector field $Y=x\frac{\partial}{\partial x}+y\frac{\partial}{\partial y}$ and
a Hamiltonian circle action
$$e^{i\theta}\ast(e^{it_1},e^{it_2}, z)\mapsto(e^{i(t_1+\theta)},e^{it_2},z).$$
The corresponding Hamiltonian function is  given by
 $$H(t_1,t_2,z=x+iy)=x.$$
  The set of Legendrian orbirs is the union of two tori $T^2\times\{x=0,y=1\}$ and  $T^2\times\{x=0,y=-1\}$.
\end{example}
\vskip2mm
We now prove Theorem \ref{theorem1b} that states that any symplectic 4-manifold with a contact type boundary that admits a free Hamiltonian circle action is a trivial bundle. From this theorem also follows that any such manifold can be obtained from the construction presented in Theorem \ref{theorem1a}.

\vskip2mm
\emph{Proof of  Theorem \ref{theorem1b}}.
 In order to show that $W$ is a trivial bundle it is enough to find a global section $\tilde{\sigma}:W/S^1\rightarrow W$. Since the action is free, then the quotient $W/S^1$ is a smooth manifold with boundary $V/S^1.$ The action is also free on the boundary, thus, the quotient $V/S^1$ is homeomorphic to a smooth surface $B$ and it  induces the orientation from $V.$
\vskip1mm
Denote by $\alpha$ an invariant contact form on the boundary and by $H=\alpha(X)$ the corresponding Hamiltonian function that extends to $W.$ Chose an invariant Riemannian metric on $W$ as in Section 
\ref{section:max} and define the gradient flow of $\nabla H$ with respect to this metric.
Since the action is free,  there are no critical points of the function $H$ and thus, every gradient trajectory has to start and end at the boundary. 
As explained in Lemma \ref{lemma:sign of H}, in the starting points we have $H<0$ and in the ending points $H>0.$ Since the boundary is connected
(\cite[Theorem C]{MN}), the set of boundary points where $H$ vanishes  is nonemty. This is presicely the set of Legendrian orbits. Therefore, the gradient flow $\Phi_t^{\nabla H}$ defines a homeomorphism $\tilde\varphi$ between the sets
 $V_-=\{H|_V\leq0\}$ and $V_+\{H|_V\geq0\}$, that is equal to identity on the set $H|_V=0.$
Further,
as $H$ is invariant under the circle action, we can decompose the surface $B$ into surfaces $B_-$ and $B_+$, where 
$$B_-=\{H|_V\leq0\}/S^1\hskip2mm\textrm{and}\hskip2mm B_+=\{H|_V\geq0\}/S^1$$
and  $B_- \cap B_+=\partial B_{\pm}=\{H|_V=0\}/S^1.$ 
Then, there exists an induced homeomorphism $\varphi: B_- \to B_+$ that is equal to identity on the set $\partial B_{\pm}$ and such that
$\pi_+ \circ \tilde \varphi=\varphi \circ \pi_-,$ where $\pi_{\pm}: V_{\pm} \to B_{\pm}$ are projections.
Since $H^2(B_-,\mathbb Z)=0$, as
$B_-$ has a non-empty boundary, it follows that  there exists a section $\sigma_{ - }:B_{ - }\rightarrow V_-.$ 
Then, the map $\sigma_+: B_+ \to V_+$ defined by $\sigma_+=\tilde \varphi \circ \sigma_- \circ \varphi^{-1}$ is also a section, since $\pi_+\circ\sigma_+$ is the identity on $B_+$. On the  set 
$B_- \cap B_+=\partial B_{\pm}$ it holds $\sigma_+=\sigma_- $ and, therefore,  the map $\sigma: B \to V$ defined by $\sigma|_{B_-}=\sigma_-$ and $\sigma|_{B_+}=\sigma_+$ is a global section on $B$.

Let us now show that there exists a global section $\tilde{\sigma}:W/S^1\rightarrow W$. First define $\tilde{\sigma}=\sigma$ on $B.$ Next, take an orbit $[x]\in W/S^1$ 
 of an interior point $x\in W.$ The gradient flow sends this orbit, in a positive time direction, to an orbit in $V_+,$ that is, $[\Phi^{\nabla H}_{t_x}(x)]\in B_+,$ where $t_x>0$ is the time when the trajectory in $x$ hits the boundary, i.e.
 when $\Phi^{\nabla H}_{t_x}(x)\in V_+.$ The map $[x]\mapsto [\Phi^{\nabla H}_{t_x}(x)]$ is well defined since $\nabla H$ and $X$ commute and, thus, the corresponding flows also commute. 
Note that the flow $\Phi^{\nabla H}_t$ of the point $\sigma([\Phi^{\nabla H}_{t_x}(x)])\in V_+$ in the negative time direction intersects the orbit of $x$ in some point in the interior of $W$. Indeed, 
$\sigma([\Phi^{\nabla H}_{t_x}(x)])\in V_+$ belongs to an orbit of $\Phi^{\nabla H}_{t_x}(x)$ and, as $X$ and $\nabla H$ commute, there is a point on its trajectory  that belongs to an orbit $O_x$ of $x.$ Therefore, we define 
$$\tilde\sigma([x])=\Phi_t^{\nabla H}(\sigma([\Phi^{\nabla H}_{t_x}(x)])) \cap O_x .$$
 Since $\tilde\sigma([x])\in W$ is a point that belongs to the orbit of $x$, then $\pi(\tilde\sigma([x]))=[x],$ where $\pi:W\to W/S^1$ is the projection. Therefore, 
  $\tilde{\sigma}:W/S^1\rightarrow W$ is a global section.

  The proof of Theorem  \ref{theorem1b} is completed.  
  
 \vskip2mm

\begin{remark}\label{remark:higher}  In higher dimensions, a symplectic manifold with a contact type boundary and a free Hamiltonian circle action may not be a trivial circle bundle. To construct an example, we will use
that for any Riemannian manifold $L$ with a free circle action, the disc cotangent bundle $D^*L$ can be equipped with  a free Hamiltonian circle action. Consider $\mathbb{RP}^n,$ for any $n\geq 2.$ Since the circle bundles over a manifold $B$ are classified by $H^2(B,\mathbb Z)$ and   $H^2(\mathbb{RP}^{n},\mathbb Z)=\mathbb Z/\mathbb Z_2,$
for every $n\geq 2,$
 it follows that there is a non-trivial circle bundle over $\mathbb{RP}^{n}.$
Denote by $L$ the total space of this bundle. The corresponding circle action on disc cotangent bundle $D^*L$ will be a Hamiltonian free circle action, but it will not be trivial.

Alternatively, consider for every $n\geq1$  a free circle action on a sphere $S^{2n-1}.$ (No free circle action exists on $S^{2n},$ because every vector field on $S^{2n}$ has to be zero somewhere.)
The lift of this action to a circle action on the disc cotangent bundle $D^*S^{2n-1}$ is a free Hamiltonian circle action. However, $D^*S^{2n-1}$ is not a trivial $S^1$-bundle. 
 Namely, a non-contractible loop in $D^*S^d$ would project to a non-contractible loop in $S^d$. Since $\pi_1(S^{d})=0,$ for every $d>1,$ it follows that $\pi_1(D^*S^d)=0,$ for every $d>1.$ 
Therefore, $D^*S^{2n-1}$ is not a trivial $S^1$ bundle.

 \end{remark}
 
%

\section{Existence of critical surfaces}\label{section:surface}

\emph{Proof of Theorem \ref{theorem2}.}

Suppose $\Sigma$ is a closed surface. Take  the $\SS^1$-bundle over $\Sigma$ given by the first Chern class equal to $[\omega_{\Sigma}].$ According to Stoke's theorem, 
$[\omega_{\Sigma}]$ is a non zero 2-form and, therefore, this is a non trivial bundle. 
Moreover, the total space $V$ is a prequantization space, that is,  there exists  a connection form $\alpha$ that is also a contact form (\cite[Theorem 3]{BW}). The right translation on each fiber is an induced $S^1$ action on $V$ and we denote by $Z$ the infinithesimal generator of this action. Then $\iota_Zd\alpha=0$ and $\alpha(Z)=1.$
Consider the circle action on $V\times D^2_{(\rho, t)}$ defined by 
 $$e^{i\theta}\ast(p, \rho, e^{it})\mapsto (e^{-i\theta}*_Vp, \rho, e^{i(\theta+t)}), \hskip2mm  e^{i\theta}\in S^1,$$
  where $*_V$ denotes the $\SS^1$-action on  $V.$ Note that this action is generated by the vector field 
  $-Z+\frac{\partial}{\partial t}.$
Next, define a 2-form on $V\times D^2$ by
$$\omega=\rho d \rho\wedge\alpha+\rho d\rho\wedge dt+\frac{1+\rho^2}{2}d\alpha.$$
In every point of $V\times D^2$ the kernel of
$\omega\wedge\omega=-\rho(1+\rho^2 )d\rho\wedge d\alpha(\alpha+dt)$
is 1-dimensional space generated by the vector field $-Z+\frac{\partial}{\partial t}.$

Let $W$ be the quotient space of  $V\times D^2$  by the vector field $-Z+\frac{\partial}{\partial t}.$
Then,  $\omega$ is a symplectic form on  $W$. 
Further, a vector field defined by
$$Y=\frac{1+\rho^2}{2\rho}\frac{\partial}{\partial \rho}$$
is a Liouville vector field  for $\omega$ pointing transversally out of the boundary of $W.$ Hence, the boundary is of contact type.
Finally, consider the circle action on $D^2$ factor of $W$  generated by the vector field
$X=\frac{\partial}{\partial t}$. Since
$\iota_X\omega=d(\rho^2/2),$ the action is Hamiltonian. The set of critical points of the corresponding Hamiltonian function, given by the condition $\rho=0$ in $W$ is precisely the set of orbits $V/{S^1}$ and this is the surface $\Sigma.$

\vskip2mm

Suppose now $\Sigma$ has a non-empty boundary. As explained in Section \ref{section:free}, $\Sigma$ admits a structure of a Liouville domain with a symplectic form $\omega_{\Sigma}$ and a Liouville vector field
$Y_{\Sigma}.$ 
Then, $(\Sigma\times D^2,\omega_{\Sigma}+\frac{i}{2}dz\wedge d\overline{z})$ is a Liouville domain since  $i_*Y_{\Sigma}+j_*\nabla f$ is a Liouville vector field on $\Sigma\times D^2$ pointing transversally out of the boundary of $\Sigma\times D^2$, where $i: \Sigma\to\Sigma\times D^2$ and $j:D^2\to\Sigma\times D^2$ are inclusions and $f(z)=|z|^2$. 
As $\Sigma\times D^2$ is a manifold with corners, we proceed by smoothing the corners.
We  define a smooth function $g:\Sigma\rightarrow\mathbb{R}$ by $g(p,t)=1+\frac{t}{\epsilon}$ on the collar neighborhood $(-\epsilon,0]\times\partial\Sigma$ and by $g=0$ in the complement of the neighborhood  of this collar neighborhood. Note that in the collar neighborhood
it holds $Y_{\Sigma}=\frac{\partial}{\partial t}.$
Define $F:\Sigma\times D^2\rightarrow\mathbb{R}$ by $F=\tilde f+\tilde g$, where $\tilde f, \tilde g:\Sigma\times D^2\rightarrow\mathbb{R}$ are natural ectensions of the functions $f$ and $g$, and define
$$W=F^{-1}([0,1]).$$
 Then $W$ is a smooth manifold with the boundary
$\partial W=F^{-1}(\{1\}).$
The vector field $i_*Y_{\Sigma}+j_*\nabla f$ is transversal to $\partial W$ since
$T\partial W=\ker dF$ and $dF(i_*Y_{\Sigma}+j_*\nabla f)=dg(Y_{\Sigma})+df(\nabla f)>0$ along the boundary.  It follows that $(W,(\omega_{\Sigma}+\frac{i}{2}dz\wedge d\overline{z})|_W)$ is a symplectic manifold with a contact type boundary.
The $S^1$-action on $\Sigma\times D^2$ defined as the rotation on $D^2$ is a Hamiltonian action. The restriction to $W$ is also a Hamiltonian action and the fixed point set is precisely $\Sigma.$

The proof of Theorem \ref{theorem2} is completed.

\vskip2mm

\begin{remark}
Note that any closed Riemannian surface admits a symplectic structure (that can be made integral). Moreover, any Riemannian surface with boundary admits a structure of a Stein domain.

\end{remark}

\begin{example}\label{example: sphere} a) If $\Sigma=S^2$ and $\omega_{\Sigma}$ is the area form on the sphere, then the prequantization space is $S^3$ with the standard contact structure, i.e. the hyperplane field of complex tangencies. The resulting manifold constructed in the previous proposition is $S^3\times D^2/_{\sim}.$ Let $D^4\sharp\mathbb{CP}^2$ denote a symplectic blow up of $D^4,$ that is, an open ball around origin of a small radius $\epsilon$ is removed and the boundary $S^3_{\epsilon}$ of the ball is collapsed along the fibers of the Hopf fibration
$S^3_{\epsilon}\rightarrow \mathbb{CP}^1$ (for details of a blow up see Section \ref{section:blow}). Chose the radius $\epsilon$  in such a way that the area form of $\mathbb{CP}^1$ is precisely $\omega_{\Sigma}$.
Then, the map
$S^3\times D^2/_{\sim}\rightarrow D^4\sharp\mathbb{CP}^2$ given by $[z_1,z_2,z]\rightarrow [z_1z,z_2z]$ is a diffeomorphism.
The  circle action on $D^4\sharp\mathbb{CP}^2$  induced by the diagonal circle action on $D^4$ is Hamiltonian and $\mathbb{CP}^1=S^2$ is a fixed surface. See also Example \ref{example:disc} b).

b) If  $(\Sigma=D^2, \frac{i}{2}dz\wedge d\overline{z})$, following the construction explained above, we obtain that
$D^4$ with the $S^1$-action given by weights $(1,0)$ is the symplectic manifold with a contact type boundary with the critical surface  $D^2.$ See also Example \ref{example:disc} a).

c)  If $(\Sigma=S^1\times[-1,1], dh\wedge dt)$, then for
$(S^1\times D^3,dh\wedge dt+\frac{i}{2}dz\wedge d\overline{z})$ where $D^3=\{|z|^2+h^2\leq1\}$ with the $S^1$-action  given by $e^{i\theta}\ast(e^{it},z,h)\mapsto(e^{it},e^{i\theta}z,h)$
it follows that $\Sigma$ is the fixed surface.
See also Example \ref{example:second}.

\end{example}

\section{Attaching a Weinstein Hamiltonian 2-handle}\label{section:handle}
The main purpose of this section is to prove Theorem \ref{theorem3}.

Let $(W,\omega)$ be symplectic 4-manifold with a contact type boundary $V$ equipped with a Hamiltonian circle action such that the  set of Legendrian orbits is non-empty.
Then, according to Lemma \ref{lemma:legendrian_tori1}, both local extrema are attained at the boundary and in the interior there can be  only isolated critical points of index 2.
Let us attach a Weinstein 2-handle $\mathbb H$ to $(W,\omega)$ so that the resulting manifold  $(\tilde{W},\tilde \omega )$ is also $S^1$-Hamiltonian with a contact type boundary. 
To do so, we also need to impose a Hamiltonian circle action on the handle.
\vskip1mm

\subsection{ Hamiltonian circle action on a Weinstein 2-handle.}
For the model of a Weinstein 2-handle we take
a  subset $\mathbb H \subset  \mathbb R^4  $
defined as the intersection $ D  \cap S$ where  $D=   \{(x_1,x_2,y_1,y_2)\in \mathbb R \hskip2mm | \hskip2mm x_1^2+x_2^2    \leq1   \}$   and $S=    \{ (x_1,x_2,y_1,y_2)\in \mathbb R \hskip2mm | \hskip2mm y_1^2+y_2^2    \leq f(x_1^2+x_2^2) \} $, for some smooth function $f$, where $f(0)=1/3$ and whose graph is tangent to the   
$D$ close to the boundary, 
as shown in Figure    \ref{handle}. We equip $\mathbb H$ with the standard symplectic structure
$$\omega_0=\sum_{k=1}^2dx_k\wedge dy_k$$
 and the Liouville vector field
$$Y=\sum_{k=1}^2(-x_k\frac{\partial}{\partial x_k}+2y_k\frac{\partial}{\partial y_k}),$$
 depicted by blue arrows.  The corresponding Liouville 1-form is given by
$$\lambda=\sum_{k=1}^2(-x_kdy_k-2y_kdx_k)$$
 and since the boundary of $\mathbb H$ is transversal to $Y,$ it admits a contact structure $\ker\lambda.$

    \begin{figure}
\centering
\includegraphics[width=8cm]{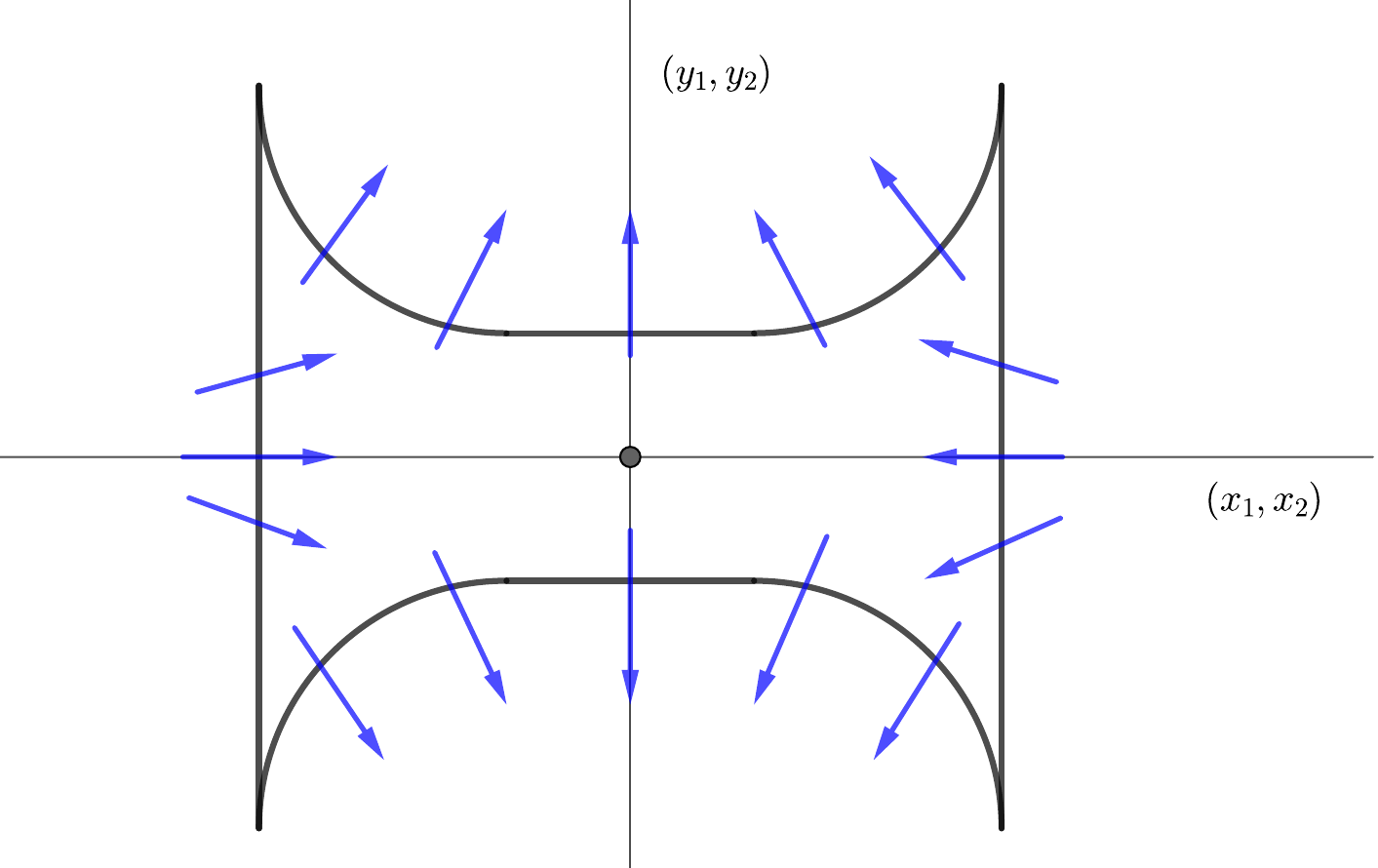}
\caption{A model for a Weinstein 2-handle}
\label{handle}
\end{figure}      

\vskip2mm

The core of the handle $\mathbb{H}$ is the stable manifold of the flow $\Phi^Y_t$ of the vector field $Y$ with respect to the critical point $\mathbb{O}=  (0,0,0,0).$ Since 
$\Phi^Y_t(x_1,x_2,y_1,y_2)=(e^{-t}x_1,e^{-t}x_2,e^{2t}y_1,e^{2t}y_2),$
  the core of the handle $\mathbb{H}$
  $$W^s(  \mathbb{O},\Phi^Y_t)=\{(x_1,x_2,y_1,y_2)\in \mathbb H | \lim_{t\rightarrow+\infty}\Phi^Y_t(x_1,x_2,y_1,y_2)=\mathbb{O}\}$$
is the following 2-disc
$ \{x_1^2+x_2^2\leq 1, y_1=y_2=0\}.$
Similarly, the co-core of the handle $\mathbb{H}$ is the 2-disc $\{ x_1=x_2=0, y_1^2+y_2^2 \leq f(0)=1/3\},$ being the unstable manifold of the point $\mathbb{O}.$

\vskip2mm

The boundary  of the handle $\mathbb H$ decomposes into two parts, $\mathbb H\cap\partial S$, where $Y$ points out the handle,  and $\mathbb H\cap\partial D$ where $Y$ points into  the handle.
The later is  called the attaching region.

\vskip2mm

Consider now the circle action on the handle defined by 
$$e^{i\theta}\ast_{(m,n)}(x_1+ix_2,y_1+iy_2)\mapsto(e^{im\theta}(x_1+ix_2), e^{in\theta}(y_1+iy_2)),$$
where $m,n\in\mathbb Z$ are relatively prime. This action  is well defined on the handle since it preserves the values of $x_1^2+x_2^2$ and $y_1^2+y_2^2.$

The vector field generating the given action is given by
 $$Z=-mx_2\frac{\partial}{\partial x_1}+mx_1\frac{\partial}{\partial x_2}-ny_2\frac{\partial}{\partial y_1}+ny_1\frac{\partial}{\partial y_2}.$$
 Since $L_Z\lambda=(m-n)(2y_1dx_2-2y_2dx_1+x_2dy_1-x_1dy_2)$ it follows that $\lambda$ is invariant under the given circle action if and only if $m=n.$ Being relatively prime we get
 $m=n=\pm1.$ 
 
 Suppose $m=n=1.$ Then
 $$\iota_Z\omega=-d(x_2y_1-x_1y_2),$$ 
 the action is Hamiltonian 
 and the moment map is
 $$H(x_1,x_2,y_1,y_2)=x_2y_1-x_1y_2.$$
 \vskip1mm

 The only interior critical point of a Hamiltonian function $H$ is $\mathbb{O}=(0,0,0,0)$ and from the matrix 
 $$Hess H(\mathbb{O})=\begin{bmatrix} 
0 & 0 & 0 & -1 \\
0 & 0 & 1 & 0 \\ 
0 &  1 & 0& 0 \\
-1 & 0&0&0
\end{bmatrix}$$
we conclude that the index of $\mathbb{O}$ is equal to 2.

\subsection{ An invariant attaching map} 
To specify the attaching of the Weinstein handle $\mathbb H$ to $(W,\omega )$ we  need to define  the attaching map, i.e.
an embedding
$\varphi:\mathbb H\cap\partial D\rightarrow V$ that preserves the corresponding contact structures but also the given Hamiltonian actions. The boundary of the core of the handle $\mathbb H$, i.e. the attaching sphere, is a Legendrian orbit $x_1^2+x_2^2=1,y_1=y_2=0$ of the introduced Hamiltonian action and we will map it onto one Legendrian orbit  in $V$. We then  extend this invariant attaching map to the map on the collar neighborhoods that will preserve corresponding Liouville structures and that will also be invariant under the given Hamiltonian circle actions in order to obtain a global Hamiltonian circle action on the glued manifold.
We divide the proof into several steps.

\vskip1mm

 \textbf{Step 1.} \emph{A standard neighborhood of a Legendrian orbit.}\\
 An orbit of a Hamiltonian circle action on a contact 3-manifold is called Legendrian if it is tangent to the contact structure. Thus, for every point $p\in V$ whose orbit is Legendrian
it holds $H(p)=\alpha(X)_p=0.$ We now describe the standard neighborhood of any Legendrian orbit.

\begin{lemma} Consider a Hamiltonian circle action on a compact contact 3-manifold $V.$ Then there exists a neighborhood of any Legendrian orbit $L$  that is $S^1$-contactomorphic to
$$(S^1\times D^2, \alpha_0=xdt+dy),$$
 where the circle action is given by
$$\theta*_0(t,x,y)\mapsto(\theta+t,x,y)$$ and
$L\cong S^1\times\{x=0, y=0\}$ in this neighborhood.
\end{lemma}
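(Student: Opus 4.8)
The plan is to establish an equivariant version of the classical Weinstein (Legendrian) neighborhood theorem by running Moser's method with $S^1$-invariant choices throughout. Since $L$ is a Legendrian orbit it is a regular orbit of the action — a fixed point would force $X=0$, whereas $X\neq 0$ along $L$ — so the action is free in a neighborhood of $L$ (the case relevant for the handle attachment; a nontrivial finite stabilizer would only rotate the normal disc and is excluded by the shape of the target model). Consequently an invariant tubular neighborhood of $L$ is a principal $S^1$-bundle over a disc, hence trivial, and one obtains an equivariant diffeomorphism onto $(S^1\times D^2,\ \theta*(t,p)=(t+\theta,p))$ carrying $L$ to $S^1\times\{0\}$. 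After averaging we may assume the contact form $\alpha$ is $S^1$-invariant; invariance then forces its coefficients to be independent of $t$, so $\alpha=a\,dt+b\,du+c\,dv$ with $a,b,c$ depending only on the disc coordinates $(u,v)$, and $a=\alpha(\partial_t)=H$ vanishes on $L$.

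First I would fix the normal coordinates. Set $x:=H=\alpha(\partial_t)$; this is an invariant function vanishing on $L$, and by the argument of Lemma \ref{lemma:legendrian_tori1} (which is purely contact-topological and applies verbatim to a general contact $3$-manifold) we have $dH\neq 0$ along $L$, so $x$ serves as one transverse coordinate. I would then choose a complementary invariant function $y$ vanishing on $L$ so that $(t,x,y)$ are $S^1$-invariant coordinates on a smaller neighborhood, with $\partial_t$ the generator $X$. The remaining freedom in $y$ (adding higher-order invariant corrections) is used to arrange that $\alpha$ and the target $\alpha_0=x\,dt+dy$ agree as $1$-forms at every point of $L=\{x=y=0\}$, with the same induced co-orientation of $\xi=\ker\alpha=\ker\alpha_0$ there; since $x=H=\alpha(\partial_t)$, the only conditions to impose are $\alpha(\partial_x)=0$ and $\alpha(\partial_y)=1$ along $L$, both achievable by an invariant adjustment of $y$.

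Next I would run the equivariant Gray–Moser argument on the linear interpolation $\alpha_s=(1-s)\alpha+s\,\alpha_0$, $s\in[0,1]$. Both endpoints are invariant and contact with $L$ Legendrian, and since the forms coincide at every point of $L$ with matching co-orientation, each $\alpha_s$ remains contact on a sufficiently small invariant neighborhood of $L$ (the contact condition is open and $\alpha_s=\alpha_0$ identically along $L$). Solving $\frac{d}{ds}(\psi_s^*\alpha_s)=0$ up to a conformal factor produces, via the usual splitting into Reeb and contact-plane directions, a time-dependent vector field $Z_s$ determined by the input $\dot\alpha_s=\alpha_0-\alpha$. Because $\dot\alpha_s$ vanishes at every point of $L$ and all structural data are invariant, $Z_s$ is automatically $S^1$-invariant and vanishes on $L$; its flow $\psi_s$ is therefore equivariant and fixes $L$ pointwise. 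The time-one map $\psi_1$ is the required $S^1$-contactomorphism from a neighborhood of $L$ onto $(S^1\times D^2,\alpha_0)$ with $L\cong S^1\times\{x=0,y=0\}$.

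The main obstacle is the bookkeeping in the last two paragraphs: one must check that the interpolating forms stay contact on a neighborhood that does not collapse onto $L$, and, crucially, that the Moser field inherits both invariance and vanishing along $L$, so that the isotopy is equivariant and keeps $L$ fixed. The equivariance is essentially automatic, since the action is free near $L$ and every ingredient (the averaged metric, the invariant tubular neighborhood, the functions $x=H$ and $y$, and hence $Z_s$) is invariant by construction; the delicate point is the order-of-vanishing estimate at $L$ guaranteeing $Z_s|_L=0$, which is exactly what the pointwise agreement $\alpha|_L=\alpha_0|_L$ from the second paragraph is arranged to provide.
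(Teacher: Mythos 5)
Your proposal is correct and follows essentially the same route as the paper: both proofs place an invariant contact form in a slice neighborhood $S^1\times D^2$, normalize it so that it agrees with $\alpha_0=x\,dt+dy$ along $L$, run Gray stability on the linear interpolation $\alpha_s$, and check that the resulting Moser vector field is $S^1$-invariant (the paper by the explicit computation $[Z_0,X_s]=0$, you by naturality of the construction, plus the useful extra observation that $X_s$ vanishes on $L$). The only substantive difference is cosmetic: the paper normalizes by dividing by the $dy$-coefficient (using that Legendrian orbits come in tori to see this coefficient is nonzero on $L$) and verifies by direct computation that $\alpha_s\wedge d\alpha_s$ is a positive convex combination of volume forms along $L$, whereas you take $x=H$ as a transverse coordinate and justify contactness of $\alpha_s$ by openness — a step whose parenthetical justification should be expanded, since pointwise agreement of the $1$-forms on $L$ does not by itself control $d\alpha_s$ there and one needs the standard bilinearity identity $\alpha_s\wedge d\alpha_s|_L=(1-s)\,\alpha\wedge d\alpha|_L+s\,\alpha_0\wedge d\alpha_0|_L$.
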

\begin{proof} A Legendrian orbit is a free orbit and according to the Slice theorem there is an $S^1\times D^2$ neighborhood of any free orbit where the action is linear on the first factor. 
Let $\alpha_1=H(x,y)dt+f(x,y)dx+g(x,y)dy$ be another $S^1$-invariant contact form on $S^1\times D^2$ giving the same orientation as $\alpha_0$ and 
so that $L$ is Legendrian orbit.
The contact condition on $L$ becomes $\alpha_1\wedge d\alpha_1=(g\frac{\partial H}{\partial x}-f\frac{\partial H}{\partial y})dy\wedge dx\wedge dt>0.$
However, Legendrian orbits do not appear isolated, they form 2-dimensional tori (Lemma \ref{lemma:legendrian_tori}). Thus, we can shrink the disc and assume that $H(0,y)=0,$ for all $(0,y)\in D^2.$ Thus 
$\frac{\partial H}{\partial y}(0,y)=0$ so $g(0,y)\neq0.$ So, we can divide $\alpha_1$ by $g$ and obtain another contact form, in the same contact structure,
$\alpha_1=H(x,y)dt+f(x,y)dx+dy,$ close to $L.$
If we denote $\alpha_s=(1-s)\alpha_0+s\alpha_1$, then
$$\alpha_s\wedge d\alpha_s=(s\frac{\partial H}{\partial x}+1-s)dy\wedge dx\wedge dt=s\alpha_1\wedge d\alpha_1+(1-s)\alpha_0\wedge d\alpha_0.$$ 
That is,  $\alpha_s\wedge d\alpha_s$ is a linear interpolation between two volume forms giving the same orientation, and, therefore,
$\alpha_s\wedge d\alpha_s$ is also a volume for every $s\in[0,1].$ According to Gray stability theorem the flow 
of the vector field $X_s,$ uniquely defined by
$$\alpha_s(X_s)=0, \hskip2mm\iota_{X_s}d\alpha_s=\dot{\alpha_s}(R_s)\alpha_s-\dot{\alpha_s},$$
where $R_s$ is the Reeb vector field for $\alpha_s,$ 
gives the  isotopy between $\alpha_0$ and $\alpha_1$. 

 It is left to show that this flow is invariant under the given circle action, that is $[Z_0,X_s]=0$, where  $Z_0=\frac{\partial}{\partial t}$ is the generator of the given circle action. 
 Since $\alpha_s(X_s)=0$, by using the Leibniz rule for the Lie derivative, we obtain
$$0=L_{Z_0}\alpha_s(X_s)=\iota_{X_s}L_{Z_0}\alpha_s+\alpha_s([Z_0,X_s])=\alpha_s([Z_0,X_s]).$$
 Similarly, since $\iota_{X_s}d\alpha_s=\dot{\alpha_s}(R_s)\alpha_s-\dot{\alpha_s}$, it follows that
$$L_{Z_0}\iota_{X_s}d\alpha_s=L_{Z_0}( \dot{\alpha_s}(R_s)\alpha_s)-L_{Z_0}\dot{\alpha_s},$$
that is,
$\iota_{X_s}L_{Z_0}d\alpha_s+\iota_{[Z_0,X_s]}d\alpha_s= \dot{\alpha_s}(R_s)L_{Z_0}\alpha_s+(L_{Z_0} \dot{\alpha_s}(R_s))\alpha_s-L_{Z_0}\dot{\alpha_s}$
and thus
$$\iota_{[Z_0,X_s]}d\alpha_s=(L_{Z_0} \dot{\alpha_s}(R_s))\alpha_s.$$
Next,
$L_{Z_0} \dot{\alpha_s}(R_s)=\iota_{R_s}L_{Z_0}\dot{\alpha_s}+\alpha_s([Z_0,R_s])=\alpha_s([Z_0,R_s]).$
Finally, from
$0=L_{Z_0}\alpha_s(R_s)=\iota_{R_s}L_{Z_0}\alpha_s+\alpha_s([Z_0,R_s])=\alpha_s([Z_0,R_s])$ it follows
$$\iota_{[Z_0,X_s]}d\alpha_s=0.$$
As $\alpha_s([Z_0,X_s])=0$, we conclude $[Z_0,X_s]=0.$ 

\end{proof}

 \textbf{Step 2.} \emph{Defining an invariant attaching map.}\\
We first note that there is a diffeomorphism $\phi: S^1\times D^2 \to \mathbb H\cap\partial D,$ that maps the coordinates $(x_1,x_2, \tilde y_1, \tilde y_2)$ to the coordinates
$(x_1,x_2,y_1,y_2)$ and, therefore,  $\phi^*\lambda=\sum_{k=1}^2(-x_kd \tilde y_k-2 \tilde y_kdx_k).$ In order to simplify the notation, in the rest we work with $S^1 \times D^2 $ with coordinates $(x_1,x_2,y_1,y_2)$ and a Liouville form $\lambda.$ 

Note that, since $x_1^2+x_2^2=1,$ the pair $(x_1,x_2)$ defines unique point on the unit circle, with argument $t,$ i.e. $x_1+ix_2=e^{it}.$

 \begin{lemma} The map
$\varphi:(S^1\times D^2,\ker\lambda)\rightarrow (S^1\times D^2\subset V, \ker\alpha_0)$
defined by
$$\varphi(x_1,x_2,y_1,y_2)=( t, -y_1\sin t-y_2\cos t,-y_1\cos t-y_2\sin t),$$
where $x_1+ix_2=e^{it}$, is an equivariant contactomorphism and desired attaching map.

 \end{lemma}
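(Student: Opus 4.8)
The plan is to establish three properties of $\varphi$: that it is a diffeomorphism onto the model neighborhood, that it is a strict contactomorphism in the sense $\varphi^*\alpha_0 = \lambda$ (with $\lambda$ restricted to the attaching region $\mathbb H\cap\partial D\cong S^1\times D^2$), and that it intertwines the two circle actions. Granting these, the conclusion is immediate: a strict contactomorphism preserves the hyperplane fields $\ker\lambda$ and $\ker\alpha_0$ and makes the glued Liouville forms match, while the equivariance guarantees that the gluing respects both Hamiltonian circle actions; finally the attaching sphere $\{y_1=y_2=0\}$ is sent to the Legendrian orbit $S^1\times\{0\}=L$, identifying $\varphi$ as the desired attaching map. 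For the diffeomorphism claim I would first note that, writing $x_1+ix_2=e^{it}$, the two disc coordinates of $\varphi$ are obtained from $(y_1,y_2)$ by a $t$-dependent orthogonal linear map; in particular $x^2+y^2=y_1^2+y_2^2$, so $\varphi$ preserves the fibers $\{t\}\times D^2$ and is a smooth bijection of $S^1\times D^2$ with smooth inverse.

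The heart of the argument is the pullback computation. I would substitute $x_1=\cos t$, $x_2=\sin t$ and use the identity $dt=x_1\,dx_2-x_2\,dx_1$, valid on $\{x_1^2+x_2^2=1\}$, into $\alpha_0=x\,dt+dy$, expanding $dy=d(-y_1\cos t-y_2\sin t)$ and carefully collecting the $dt$-terms produced by differentiating the trigonometric coefficients. The aim is to show that everything recombines into
\[
\varphi^*\alpha_0 = -\cos t\,dy_1 - \sin t\,dy_2 + (2y_1\sin t - 2y_2\cos t)\,dt,
\]
which is exactly $\lambda$ after the same substitution on $\partial D$. This would give $\varphi^*\alpha_0=\lambda$ and hence preservation of the contact forms and structures.

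For equivariance I would pass to the complex coordinate $w=y_1+iy_2$ on the disc. The handle action with weights $(1,1)$ sends $t\mapsto t+\theta$ and $w\mapsto e^{i\theta}w$, whereas the target action $\ast_0$ only rotates the circle coordinate, $t\mapsto t+\theta$, fixing $(x,y)$. Assembling the two disc coordinates of $\varphi$ into a single complex quantity of the form $e^{it}\,\overline{w}$ (up to a fixed phase), one sees at once that it is invariant under the simultaneous substitutions $t\mapsto t+\theta$, $w\mapsto e^{i\theta}w$, since $e^{i(t+\theta)}\,\overline{e^{i\theta}w}=e^{it}\,\overline{w}$. Thus the disc coordinates of the image are unchanged while $t$ is shifted by $\theta$, which is precisely $\ast_0$; so $\varphi(e^{i\theta}\ast p)=\theta\ast_0\varphi(p)$.

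The hard part will be the bookkeeping in the pullback: one must track the $dt$-contributions arising from differentiating $\cos t$ and $\sin t$ inside $dy$, combine them with the $x\,dt$ term, and verify that the coefficient of $dt$ assembles into $2y_1\sin t-2y_2\cos t$. This is exactly the step where the precise signs in the trigonometric terms of $\varphi$ must be pinned down, so I would treat it as the delicate check rather than a routine one. Once $\varphi^*\alpha_0=\lambda$ and the equivariance are in hand, the remaining claim is free: $\varphi$ maps the Legendrian attaching sphere $\{y_1=y_2=0\}$ onto $S^1\times\{0\}=L$, so it is an equivariant contactomorphism onto a standard neighborhood of a Legendrian orbit, i.e. the desired attaching map.
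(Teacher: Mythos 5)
Your proposal follows the same route as the paper's proof: show $\varphi$ is a fiber\--preserving diffeomorphism, verify $\varphi^*\alpha_0=\lambda$ by direct computation, check equivariance (the paper just asserts $\varphi(e^{i\theta}\ast_{(1,1)}p)=\theta\ast_0\varphi(p)$; your reformulation via $w=y_1+iy_2$ is a clean way to see it), and note that the attaching sphere $\{y_1=y_2=0\}$ is sent to $L$. One concrete warning about the ``delicate check'' you flag: your claim that $(y_1,y_2)\mapsto(x,y)$ is a $t$-dependent orthogonal map, and your target identity $\varphi^*\alpha_0=-\cos t\,dy_1-\sin t\,dy_2+(2y_1\sin t-2y_2\cos t)\,dt=\lambda$, hold for the map whose second coordinate is $y_1\sin t-y_2\cos t$ (the one consistent with the explicit inverse the paper records), not for the formula as printed in the statement: with the printed signs the fiberwise matrix has determinant $\sin^2t-\cos^2t=-\cos 2t$, so it degenerates at $t=\pi/4$ and the pullback computation does not close up. Once that sign is corrected, your computations go through exactly as planned and the argument coincides with the paper's.
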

 
  \begin{proof} 
Since 
$$\varphi^{-1}(t,x,y)=( \cos t, \sin t,-y\cos t+x\sin t,-y\sin t-x\cos t)$$
one can easily check that $\varphi$ is a diffeomorphism.
Moreover, $\varphi^*\alpha_0=\lambda,$ i.e. $ \varphi$ is a contactomorphism.
Next, 
$$\varphi(e^{i\theta}\ast_{(1,1)}(x_1,x_2,y_1,y_2))=\theta\ast_0\varphi(x_1,x_2,y_1,y_2),$$
 i.e. $\varphi$ is $S^1$-invariant.
 
Finally, $\varphi$ maps the attaching sphere $S^1\times\{x_1=x_2=0\}$ of the handle, i.e. the Legendrian orbit in a point $(1,0,0,0),$ to the Legendrian orbit $L=S^1\times\{0\}\subset S^1\times D^2$ in $V.$
 
  \end{proof}

  \textbf{Step 3.} \emph{Extending the attaching map to an invariant symplectomorphism.}\\ 
 In order to obtain an invariant Liouville structure in the neighborhoad of the boundary of the manifold $(  \tilde W,   \tilde  \omega)$, we need to glue also the collar neighborhoods of these contact manifolds and take care of the action.
 As explained in Section   \ref{section:max}, there exists a boundary collar of $S^1\times D^2\subset V$
  diffeomorphic to $(-\epsilon,0]\times S^1\times D^2$ with the symplectic structure $d(e^{s} \alpha_0)$  and a Liouville vector field $\frac{\partial}{\partial s}$ pointing out of $V.$ On the other hand, there exists a boundary collar of the attaching region $S^1\times D^2\subset  \mathbb H$ diffeomorphic to    
  $[0,\epsilon)\times S^1\times D^2$ with the symplectic structure $d(e^{s} \lambda)$ and a Liouville vector field $\frac{\partial}{\partial s}$ pointing into the handle $  \mathbb H.$  
 We now extend $\varphi$ to the map
 $$\tilde{\varphi}:([0,\epsilon)\times S^1\times D^2,d(e^{s} \lambda))\rightarrow ((-\epsilon,0]\times S^1\times D^2\subset V, d(e^{s} \alpha_0))$$
defined by
 $$\tilde{\varphi}(s,p)=(s-\epsilon ,\varphi(p)).$$

Note that $\tilde{\varphi}$  is invariant with respect to the Hamiltonian circle actions, since the actions are constant in $s$-direction (see Equation   \ref{eq:action}). Moreover, the boundaries glued by $\varphi$ will be now glued smoothly.
Thus, described attaching of an $S^1$-Hamiltonian 2-handle to the $S^1$-Hamiltonian symplectic manifold $(W,\omega)$ with a contact type boundary $V$ is a new $S^1$-Hamiltonian symplectic
manifold $(\tilde{W},   \tilde   \omega)$.

  \textbf{Step 4.} \emph{Global Hamiltonian circle action.}\\ 
Let us find the Hamiltonian function on $\tilde{W}.$
If $X_W$ and $X_h$ denote the generators of the Hamiltonian actions on $W$ and on the handle, respectively, and if $H_W$ and $H_h$ are the corresponding Hamiltonian functions
 then, on the glued part of the boundary, we have
$$H_h=\lambda(X_h)=\tilde{\varphi}^*\lambda(X_h)=\alpha_0(d\tilde{\varphi}(X_h))=\alpha_0(X_W)=H_W,$$
since $\tilde{\varphi}(X_h)=X_W.$ Thus the Hamiltonian function on $\tilde{W}$ being defined by
$\tilde{H}|_W=H_W$ and $\tilde{H}|_h=H_h$ is well defined.

The proof of Theorem  \ref{theorem3} is completed.

\begin{example} a) Consider $(S^1\times D^3, dh\wedge dt+\frac{i}{2}dz\wedge d\overline{z})$ with the Liouville vector field $Y=h\frac{\partial}{\partial h}+\frac{1}{2}(z\frac{\partial}{\partial z}+\overline{z}\frac{\partial}{\overline{z}})$ and with the free Hamiltonian circle action. 
Let us attach a Hamiltonian 2-handle in the manner described above. The set of Legendrian orbits in the boundary of $S^1\times D^3$ is the torus $S^1\times \{h=0\}\subset S^1\times S^2.$ Take any such orbit.
Note that 
there is a Weinstein handle decomposition of $S^1\times D^3$ into one 0-handle and one 1-handle. In this decomposition,  the chosen Legendrian orbit  intersects the belt sphere of the 1-handle $0\times S^2\subset D^1\times D^3$ in a single point. Since we are mapping the attaching sphere to this Legendrian orbit, we conclude that the attaching sphere of the 2-handle intersects the belt sphere of the 1-handle in a single point. Thus, they can be canceled (i.e. they fill together $D^4$) and the resulting manifold is just a 0-handle, that is
$D^4$ with the standard symplectic form $\omega_{st}=\sum_{k=1}^2dx_k\wedge dy_k.$ Since the resulting manifold contains one isolated critical point of index 2,
we conclude that by gluing Hamiltonian 2-handle to $S^1\times D^3$ we obtain $(D^4,\omega_{st})$ with the Hamiltonian circle action given by the weights $(1,-1).$
\vskip1mm
b) Let us attach a Weinstein 2-handle to $(D^4,\omega_{st})$ with the Hamiltonian circle action given by the weights $(1,-1).$ 
 The set of Legendrian orbits in the boundary of $D^4$  is the torus $\{(z_1,z_2)\in D^4 \hskip1mm | \hskip1mm |z_1|=|z_2|=1/\sqrt 2\}.$ Take  the Legendrian orbit 
$$L=\Big\{(\frac{1}{\sqrt 2}e^{i\theta},\frac{1}{\sqrt 2} e^{-i\theta})\in S^3\hskip2mm | \hskip2mm\theta\in S^1\Big\}.$$
This  orbit bounds a Lagrangian disc
$D=\Big\{(\frac{\rho}{\sqrt 2}e^{i\theta},\frac{\rho}{\sqrt 2} e^{-i\theta})\in D^4\hskip2mm | \hskip2mm\theta\in S^1, \rho \in [0,1]\Big\}$ in $(D^4,\omega_{st})$ and  the standard neighborhood of  $D$,  the disc cotangent bundle  $D^*D$ with the canonical structure  is Weinstein homotopic to $(D^4,\omega_{st})$.  As a Weinstein 2-handle  is Weinstein homotopic to  $D^*D^2$ where the zero section is the core of the handle, the attaching map is a gluing of zero sections  and, thus, we obtain the cotangent bundle of $S^2.$
(Alternatively, from the Gompf-Kirby theory, if 
the framing coefficient  of the attaching Legendrian circle in the boundary of $D^4$ is equal to $-1$, then  the manifold obtained is the disc cotangent bundle of the sphere, $D^*S^2$ and, as explained in \cite[Example 3.5.7]{Geiges}), the framing coefficient  of $L=\Big\{(x_1,y_1,x_2,y_2)\in S^3\hskip2mm | \hskip2mmx_1^2+x_2^2=1, y_1^2+y_2^2=0\Big\}$  is precisely -1.) For more details on Weinstein handlebody structure of cotangent bundles over surfaces we refer to \cite[Section 7]{ACSG}. Therefore, by attaching a Weinstein 2-handle to $D^4$ along $L$ we obtain a Weinstein domain that is Weinstein homotopic to $D^*S^2$ with two isolated critical points both with index 2.
 Note that this manifold can also be obtained by taking the cosphere bundle of $S^2$ with the $T^2$ action as shown in Example \ref{example:cotangent}.

If we keep attaching Weinstein 2-handles to $D^*S^2$ we obtain a linear plumbing of disc cotangent bundles of spheres. The number of isolated critical points of the corresponding Hamiltonian action is equal to the number of 2-handles plus 2.

\vskip1mm
c) Consider again $(S^1\times D^3, \omega=dh\wedge dt+\frac{i}{2}dz\wedge d\overline{z})$ with a Hamiltonian action $(k=2,m=1)$ as in Example \ref{example:second}. The corresponding Hamiltonian function is given by
$H(h,e^{it},z)=2h+\frac{1}{2}|z|^2$ and, thus, 
the set of Legendrian orbits is the torus $S^1\times\{h=2-\sqrt 5\}.$
Note that the action is not free, namely, there exists one $\mathbb{Z}_2$ gradient annulus, $S^1\times \{z=0\}$. Let us attach a Hamiltonian Weinstein 2-handle. Take the Legendrian orbit
$$L=\Big\{(e^{2i\theta}, h=c, z=de^{i\theta})\in S^1\times S^2 \hskip2mm |\hskip2mm \theta\in S^1\Big\},$$
where $c=2-\sqrt5$ and $d^2=4\sqrt5-8.$ 
This orbit bounds $M=\Big\{(e^{2i\theta}, h=c, z=d\rho e^{i\theta})\in S^1\times D^3 \hskip2mm |\hskip2mm \theta\in S^1, \rho \in[0,1]\Big\}$ what is topologically a Möbius band. Moreover,  $M$ is a Lagrangian submanifold in  $(S^1 \times D^3,\omega)$ and  $D^*M$ with the canonical Weinstein structure  is Weinstein homotopic to $(S^1 \times D^3,\omega)$. Then, by attaching a Weinstein 2-handle to it, we glue the core (the zero section of $D^*D^2$)  along the boundary of the Möbius band, and again the attaching map represents the gluing of the zero sections. Therefore,  we obtain a Weinstein domain that is Weinstein homotopic to $D^*\mathbb{RP}^2$ with one isolated critical point of index 2 and one $\mathbb{Z}_2$-gradient annulus.
 Note that this manifold can also be obtained by taking the cosphere bundle of $\mathbb{RP}^2$ with the $T^2$ action as shown in Example \ref{example:cotangent}.

\vskip1mm

d)
Consider the disc cotangent bundle of the torus $(D^*T^2=T^2\times D^2, dx\wedge dt_1+dy\wedge dt_2)$ with the Liouville vector field $Y=x\frac{\partial}{\partial x}+y\frac{\partial}{\partial y}$
and with the free Hamiltonian circle action 
$$e^{i\theta}\ast(e^{it_1},e^{it_2}, x,y)\mapsto(e^{i(t_1+m_1\theta)},e^{i(t_2+m_2\theta)},x,y),$$
where $m_1,m_2\in\mathbb Z$ are relatively prime.
The corresponding Hamiltonian function is 
 $H(t_1,t_2,x,y)=m_1x+m_2y.$ The set of Legendrian orbirs is the union of two tori 
 $T^2\times\{x=\frac{m_2}{\sqrt{m_1^2+m_2^2}}, y=\frac{-m_1}{\sqrt{m_1^2+m_2^2}}\}$
 and  
  $T^2\times\{x=\frac{-m_2}{\sqrt{m_1^2+m_2^2}}, y=\frac{m_1}{\sqrt{m_1^2+m_2^2}}\}$
and every Legendrian orbit is uniquely given by the slope $(m_1,m_2)$ in the torus $T^2$ and by  the point $(x,y)\in\partial D^2.$ 
 Attaching a Weinstein 2-handle to $D^*T^2$ where the attaching curve is given by $(m_1,m_2)\in T^2$ we obtain a Weinstein domain that is Weinstein homotopic to
 the self-plumbing of the disc cotangent bundle of $S^2$ as shown in \cite[10.1.]{ACSG}. (Note that it does not depend on the choice of constants $(m_1,m_2)$.)
The resulting Hamiltonian manifold has only one isolated critical point of index 2.

\end{example}

\begin{remark}(Attaching a Hamiltonian 1-handle)

We now briefly explain that by attaching Hamiltonian 1-handle we obtain the manifold that was already obtained by applying the method in Theorem \ref{theorem2}.  

Consider a symplectic 1-handle
$(D^1_{y_2}\times D^3_{(x_1,x_2,y_1)},\omega_0=\sum_{k=1}^2dx_k\wedge dy_k)$ with the Liouville vector field
$$Y=\frac{1}{2}x_1\frac{\partial}{\partial x_1}+\frac{1}{2}y_1\frac{\partial}{\partial y_1}+2x_2\frac{\partial}{\partial x_2}-y_2\frac{\partial}{\partial y_2}.$$

Then the Liouville 1-form is
$$\lambda=\frac{1}{2}x_1dy_1-\frac{1}{2}y_1dx_1+2x_2dy_2+y_2dx_2.$$
Note that $Y=\nabla f$ where $f(x_1,y_1,x_2,y_2)=\frac{1}{4}x_1^2+\frac{1}{4}y_1^2+x_2^2-\frac{1}{2}y_2^2.$
Gradient trajectories defined by
$$\frac{d\psi_t}{dt}|_{t=t_0}=Y_{\psi(t_0)}$$
are
$$\psi_t(x_1,y_1,x_2,y_2)=(e^{\frac{t}{2}}x_1,e^{\frac{t}{2}}y_1,e^{2t}x_2,e^{-t}y_2).$$
The core of the handle is the stable manifold of the critical point $(0,0,0,0)$ and this is
$$W^s((0,0,0,0))=\{x_1=x_2=y_1=0\}=D^1\times\{0\}.$$
The cocore of tha handle is the unstable manifold and this is
$$W^u((0,0,0,0))=\{y_2=0\}=\{0\}\times D^3.$$
\vskip5mm
Consider the circle action on the handle defined by
$$e^{i\theta}\ast_{m}(y_2,x_1+iy_1,y_1)\mapsto(y_2,e^{im\theta}(x_1+iy_1), x_2),$$
where $m\in\{1,-1\}$, in order to have an effective action.

The vector field generating the action is given by
 $$Z=-my_1\frac{\partial}{\partial x_1}+mx_1\frac{\partial}{\partial y_1}.$$
 Then  $L_Z\lambda=0$ and $\lambda$ is invariant under the given circle action.
 
 Next, we compute the corresponding Hamiltonian function
 $$H(x_1,x_2,y_1,y_2)=\frac{m}{2}(x_1^2+y_1^2).$$

There is a fixed surface: 
 $$D^1_{y_2}\times D^1_{x_2}\times\{x_1=y_1=0\}\subset D^1_{y_2}\times D^3_{(x_1,x_2,y_1)}.$$
 This fixed surface is a product of the core of the handle and an interval. Hence, this surface has a non-empty boundary.
 
In order to glue such a 1-handle, we need that the Hamiltonian $S^1$-action on a manifold $(W,\omega)$ with a contact type boundary also admits a fixed surface with a non-empty boundary.
Let us denote such a surface by $\Sigma_g^n,$ where $g$ is a genus and $n$ is the number of boundary components. If it corresponds to the maximum (minimum), then we are able to glue a Hamiltonian 1-handle to 
  $(W,\omega)$ with $m=1$ ($m=-1$). We obtain new Hamiltonian $S^1$ manifold $(W',\omega')$ with a fixed surface $\Sigma_{g+1}^{n-1}.$ That is, the genus increases, however, there will be one boundary component less.

\end{remark}

\section{Blow up of an interior fixed point}\label{section:blow}

This section is mostly adopted from Karshon \cite[Section 6]{Karshon} since the procedure of a blow up of sufficiently small size  does not change the contact boundary.

\vskip2mm

\emph{Proof of Theorem \ref{theorem4}.}

Let $(W,\omega)$ be any symplectic 4-manifold with a contact type boundary and a Hamiltonian circle action and  
let us perform a symplectic blow up of $W$ at an interior critical point $x.$

 $\bullet$ Suppose $x\in W$ is an isolated critical point. (Recall also that critical points at the boundary are never isolated, they appear in circles.)

According to equivariant Darboux theorem, there is a small neighborhood around $x$ in the interior of $W$
that is equivariantly symplectomorphic to the disc of a small radius
$(D^4, \omega_{st}=\frac{i}{2}\sum_{k=1}^2dz_k\wedge d\overline{z}_k)$ with a Liouville vecor field  $Y=\frac{1}{2}\sum_{k=1}^2(z_k\frac{\partial}{\partial z_k}+\overline{z}_k\frac{\partial}{\overline{z}_k}))$ and with the Hamiltonian circle action given by weights $(m,n)$ (see Example \ref{example:disc}), 
where $m,n\in\mathbb{Z}$ are relatively prime and $mn\neq0$.  (If $mn=0$ then, there $x$ belongs to a critical surface, so it is not an isolated critical point.)
The corresponding Hamiltonian function is given by 
$H(z_1,z_2)=\frac{1}{2}(m|z_1|^2+n|z_2|^2)$
 and the point $(0,0)$ is an isolated critical point corresponding to $x$.

We perform a symplectic blow up at $(0,0),$ 
by removing a ball around $(0,0)$ of a small radius $\epsilon$ and collapsing the boundary $S^3_{\epsilon}$ along the fibers of the Hopf fibration, i.e. identifing the points in $S^3_{\epsilon}$ by $(z_1,z_2)\sim(e^{i\theta}z_1,e^{i\theta}z_2).$ The orbit space obtained is an exceptional sphere and it is symplectomorphic to $\mathbb{CP}^1$ with a Fubini-Study symplectic form. A circle action on $D^4$ defined by the weights $(m,n)$ induces a circle action on the blow up $D^4\sharp\mathbb{CP}^2$.

 If $mn=1$
 every point of the exceptional sphere is fixed and in this way we obtain a critical surface $S^2$ (see also Example \ref{example: sphere}).

If $mn\neq1$ then there are two fixed points in the exceptional sphere $\mathbb{CP}^1,$ namely $[\epsilon:0]$ and $[0:\epsilon].$ 
 We distinguish the following cases.
 If $mn>0,$ i.e $(0,0)$ is a local extremum,  one fixed point will be local extremum, the other will have index 2. Precisely, if $n>m>0$ then $[\epsilon:0]$ is a local minimum; if $m>n>0$ then $[0:\epsilon]$ is a local minimum and similarly if $m,n<0.$
 If $mn<0$ i.e. $(0,0)$ has index 2, then both fixed points will have index 2. 
Note also that all the points of the exceptional sphere $\mathbb{CP}^1$  are fixed by $\mathbb{Z}_{k=|m-n|}$.
Namely,
take any $e^{2\pi i/k},$ where $0\leq i\leq k-1.$ Assume $m>n,$ then $k=m-n$ and it follows that
$$e^{\frac{2i\pi}{k}}*(z_1,z_2)\mapsto(e^{\frac{2mi\pi}{k}}z_1,e^{\frac{2ni\pi}{k}}z_2)\sim (e^{\frac{2ki\pi}{k}}z_1,z_2)=(z_1,z_2).$$
Therefore, the exceptional sphere of a blow up is a $\mathbb{Z}_k$ gradient sphere.

\vskip2mm

$\bullet$ Suppose  $x\in\Sigma$, where $\Sigma\in W$ is a critical surface.

According to equivariant Darboux theorem, there is a small neighborhood around $x$ in the interior of $W$
that is equivariantly symplectomorphic to the disc of a small radius
$(D^4, \omega_{st}=\frac{i}{2}\sum_{k=1}^2dz_k\wedge d\overline{z}_k)$ with a Liouville vecor field  $Y=\frac{1}{2}\sum_{k=1}^2(z_k\frac{\partial}{\partial z_k}+\overline{z}_k\frac{\partial}{\overline{z}_k}))$ and with the Hamiltonian circle action given by weights $(m,0)$ or $(0,m)$,
 where $m=\pm1.$ If the weights are $(m,0)$, then the neighborhood of a critical point $x$ in a critical surface $\Sigma$ is a disc $\{0\}\times D^2.$

Suppose that  $x$ corresponds to $(0,0)$ and perform a symplectic blow up at $(0,0)$ of a small size $\epsilon>0.$
After removing the small disc $(0,z_2),$ where $|z_2|<\epsilon,$ inside the disc $\{0\}\times D^2,$ and collapsing the boundary of the small disc via $(0,e^{it}\epsilon)\sim(0,e^{i(\theta+t)}\epsilon)$  we conclude that obtained neighborhood will be also homeomorphic to a 2-disc and it has one intersection point $[0:\epsilon]$ with an exceptional sphere. As above  the
circle action on $D^4$ induces a circle action on the blow up $D^4\sharp\mathbb{CP}^2.$ Obtained $D^2$ disc will be a pointwise fixed and a point
 $[\epsilon:0]$ will be an isolated critical point.
The exceptional sphere is a gradient sphere with a trivial stabiliser. Therefore, in  $D^4\sharp\mathbb{CP}^2$ there is a critical surface diffeomorphic to $\Sigma$ and one additional critical point.

One may proceed blowing up a critical surface $k\geq1$ times and obtain $k$ gradient spheres whose north (south) poles belong to the surface, if the surface is a local maximum (minimum).

 \subsection*{Acknowledgments}
 The author is very grateful to Klaus Niederkrüger and Laura Starkston for numerous useful discussions.  
This research is partially supported by the Ministry of Education, Science and Technological Development, Republic of Serbia, through the project 451-03-66/2024-03/200104.


\begin{thebibliography}{9}

 \bibitem[ACSG+]{ACSG} 
  B. Acu, O. Capovilla-Searle, A. Gadbled , A. Marinković, E. Murphy, L. Starkston and A. Wu.
 \emph{ Weinstein handlebodies for complements of smoothed toric divisors.} 
   Memoirs of the American Mathematical Society- accepted.

 \bibitem[AH12]{AH} 
K. Ahara and A. Hattori. 
 \textit{ 4-dimensional symplectic $S^1$-manifolds admitting moment map.}
 J. Fac. Sci. Univ. Tokyo Sect. IA Math. vol 38 (1991), no. 2, 251–298.
 

 \bibitem[At82]{Atiyah} 
M. F. Atiyah.
  \textit{Convexity and commuting Hamiltonians.} 
Bull. London Math. Soc. vol 14 (1982), no. 1, 1–15.

  


  
 \bibitem[Au04]{Audin} 
  M. Audin.
  \textit{Torus actions on symplectic manifolds. 2nd revised ed..} 
Progress in Mathematics (Boston, Mass.) 93. Basel: Birkhäuser, 2004.


 \bibitem[BW58]{BW} 
W. M. Boothby and H. C. Wang.
  \textit{On contact manifolds.} 
Annals of Mathematics.
vol 68 (1958),  no. 3. 721-734.

  



 \bibitem[De88]{Delzant} T. Delzant.
   \textit{ Hamiltoniens périodiques et images convexes de l’application moment.} 
   Bull. Soc. Math. France. vol 116 (1988), no. 3, 315–339.


  
 \bibitem[G08]{Geiges} 
  H. Geiges.
  \textit{An introduction to Contact Topology.} 
Cambridge Studies in Advanced Mathematics, vol 109, Cambridge University Press, Cambridge, 2008.

 \bibitem[GS82]{GS} 
 V. Guillemin and S. Sternberg.
   \textit{Convexity properties of the moment mapping.}
    Invent. Math. vol 67 (1982), no. 3, 491–513.

 \bibitem[HK19]{HK} 
 T. S. Holm and L. Kessler.
   \textit{ Circle actions on symplectic four-manifolds.}
Commun. Anal. Geom.
vol 27, (2019), no. 2, 421–464.

       \bibitem[JR97]{JR} 
J.D.S. Jones and J.H. Rawnsley.
   \textit{           Hamiltonian circle actions on symplectic manifolds and the signature.}      
      J. Geom. Phys.
vol 23 (1997),  301-307.      
      
      
         \bibitem[KL97]{KL}    
   Y. Karshon and E. Lerman.
     \textit{    Hamiltonian torus actions on symplectic orbifolds and toric varieties}   
      Trans. Am. Math. Soc.      
vol 349 (1997), no. 10,  4201–4230.   



    \bibitem[Ka99]{Karshon}    
      Y. Karshon.
     \textit{    Periodic Hamiltonian flows on four-dimensional manifolds.}
      Mem. Amer. Math. Soc. vol 141 (1999), no. 672, viii+71.
      
      
         
         \bibitem[KZ18]{KZ}    
    Y. Karshon and F. Ziltener.
     \textit{    Hamiltonian group actions on exact symplectic manifolds with proper momentum maps are standard.}  
       Trans. Am. Math. Soc.       
vol 370 (2018), no. 2, 1409-1428.

      
      
        \bibitem[Li07]{Li}     
      H. Li.
   \textit{ The fundamental group of symplectic manifolds with Hamiltonian Lie group actions.}
J. Symplectic Geom. vol 4 (2007), no. 3, 345–372. 
     
      
       \bibitem[Lu79]{Lutz}
         R. Lutz. 
          \textit{        Sur la géométrie des structures de contact invariantes.}
           Ann. Inst. Fourier (Grenoble) vol 29 (1979), no. 1, 283–306.


        
 \bibitem[MN]{MN}
  A. Marinković and K. Niederkrüger. 
   \textit{   Symplectic circle actions on manifolds with contact type boundary.} arXiv:2203.08288.
   
    \bibitem[McD88]{McDuff}
     D. McDuff.
       \textit{   The moment map for circle actions on symplectic manifolds.}
        J. Geom. Phys. vol 5 (1988), no. 2, 149–160.   
   
       \bibitem[McDS17]{MS}
       D. McDuff and D. Salamon.
         \textit{    Introduction to symplectic topology.} 
         Oxford Mathematical Monographs. New York, NY: Oxford University Press.  (2017).
         
           
       \bibitem[McDT06]{MT} 
              D. McDuff and S. Tolman.
         \textit{   Topological properties of Hamiltonian circle actions.}
         IMRP Int. Math.
Res. Pap. (2006) 72826, 1–77.
                  
         
                \bibitem[Nie05]{Nie}     
                    K Niederkrüger.
    \textit{  Compact Lie group actions on contact manifolds.} (2005) PhD thesis.        
    
     
    
              \bibitem[T17]{Tolman}     
              S. Tolman.
              \textit{   Non-Hamiltonian actions with isolated fixed points.}
               Invent. Math.    vol 210 (2017),  877–910.
    
    

\end{thebibliography}
\end{document}